 \newcommand{\url}[1]{{\tt #1}}
 \renewcommand{\includegraphics}[1]{\textbf{Image: #1}}
\newcommand{\AAA}{\mathcal{A}}
\newcommand{\BBB}{\mathcal{B}} 
\newcommand{\NNN}{\mathbb{N}}
\newcommand{\RRR}{\mathbb{R}}
\newcommand{\SSS}{S}
\newcommand{\BS}{\BBB_\SSS} 
\newcommand{\eps}{\varepsilon}
\newtheorem{theorem}{Theorem}
\newtheorem{lemma}[theorem]{Lemma}
\newtheorem{corollary}[theorem]{Corollary}
\newtheorem{proposition}[theorem]{Proposition}
\theoremstyle{remark}
\newtheorem{remark}[theorem]{Remark}
\newtheorem{example}[theorem]{Example}
\newcommand{\card}{\textnormal{card}}
\newcommand{\Prob}{P}
\newcommand{\Prmu}{\mu}
\newcommand{\diam}{\textnormal{diam}}
\renewcommand\L{\mathrm{L}}
\newcommand\C{\mathrm{C}}
\newcommand\RR{\mathrm{RR}}
\newcommand\DET{\mathrm{DET}}
\newcommand\RATIO{\mathrm{RATIO}}
\newcommand\LAVG{\mathrm{LAVG}}
\newcommand\LMAX{\mathrm{LMAX}}
\newcommand\DIV{\mathrm{DIV}}
\renewcommand\c{\mathrm{c}}
\newcommand\rr{\mathrm{rr}}
\renewcommand\det{\mathrm{det}}
\newcommand\lavg{\mathrm{lavg}}
\newcommand\id{\mathrm{id}}
\newcommand\abs[1]{\left | #1 \right |}
\begin{document}
\bibliographystyle{siam}
\title{Strong laws for recurrence quantification analysis}

\author{M. Grend\'ar}
\author{J. Majerov\'a}
\author{V. \v Spitalsk\'y}

\address{Slovanet a.s., Z\'ahradn\'icka 151, 821 08 Bratislava, Slovakia\newline \indent
Department of Mathematics, Faculty of Natural Sciences, Matej Bel University, Tajovsk\'eho 40, 97401 Bansk\'a Bystrica, Slovakia}

\begin{abstract}
The recurrence rate and determinism are two of the basic complexity measures
studied in the recurrence quantification analysis.
In this paper, the recurrence rate and determinism are expressed in terms of the correlation sum,
and strong laws of large numbers are given for them.
\end{abstract}
\keywords{Strong law of large numbers, recurrence quantification analysis, recurrence rate,
determinism, correlation integral}
\subjclass[2010]{37A50, 60F15 (Primary) 37N99, 60G10 (Secondary)}

\maketitle

\section{Introduction}

The notion of recurrence is one of the fundamental notions in the theory of dynamical
systems. Recurrence plots, introduced by Eckmann, Kamphorst and Ruelle \cite{Eckmann}
in 1987, provide a powerful tool for
recurrence visualization.
The \emph{recurrence plot} of the
trajectory $x_0,x_1,\dots, x_{n-1}$ of a point $x=x_0$
is a black-and-white image
with a pixel $(i,j)$ being black if and only if the trajectory at time $j$ recurs to
the state at time $i$; that is, the points $x_i,x_j$ are close to each other.
The recurrence plot provides a two-dimensional representation of an (arbitrary-dimensional)
dynamical system.

The quantitative study of recurrence plots,
called \emph{recurrence quantification analysis (RQA)},
was initiated by Zbilut and Webber in \cite{Zbilut},
where the authors introduced
several measures of complexity based on the recurrence plot.
Among them, the recurrence rate $\RR$ and the determinism $\DET$
are probably the most important and widely used ones. Their definitions are based on
\emph{diagonal lines} (that is, segments of black points parallel
to the main diagonal), which correspond
to recurrences of parts of the trajectory.

Since the seminal paper \cite{Zbilut},
new RQA tools, quantities and modifications were introduced
and recurrence quantification has been applied in many areas of science,
cf.~\cite{Marwan1} and \cite{Kulkarni}, among others.

Despite its wide use, theoretical properties of recurrence measures
were studied rarely. Asymptotic properties of RQA characteristics
were studied e.g.~in \cite{Faure1,Donges,Faure2,Thiel1,Zou}.
The correlation sum, tightly connected
with the recurrence rate,
as well as derived quantities such as the correlation integral,
correlation dimension and correlation entropy,
were studied extensively, cf.~\cite{Kantz}.
One of the fundamental results, namely
the strong law for correlation sums of ergodic processes, was proved
(by different methods and under different conditions)
in \cite{Pesin1,Pesin2,Aaronson,Serinko,Manning}.
It states that, for a
separable metric space $(Z,d)$ and
a $\mu$-ergodic dynamical system on it,
the correlation sum of the trajectories of almost every point $x\in Z$
with every (up to countably many) $r>0$
converges to the correlation integral
\begin{equation}\label{eq:strong-law-C-intro}
 \lim_{n\to\infty} \C(x,n,r) = \c(r),
 \qquad\text{where}\quad
 \c(r)=\mu\times\mu \{(x,y):\ d(x,y)\le r\} 
\end{equation}
and $\C(x,n,r)=(1/n^2)\cdot\card\{(i,j):\ 0\le i,j<n,\ d(x_i,x_j)\le r\}$.
Recall that the correlation integral $\c(r)$ is just the probability
$P\{d(X,Y)\le r\}$ that two independent random variables $X,Y$ with distribution $\mu$
are $r$-close. It is worth noting that the correlation sum, which measures the level of dependence
in a trajectory, asymptotically turns into the probability
of closeness of two \emph{independent} random variables.

The main purpose of the present paper is to study asymptotic properties
of RQA characteristics for ergodic processes. We start
with a proof of a 
simple formula giving an expression of
the recurrence rate via correlation sums,
see Proposition~\ref{prop:RR-via-C}:
\begin{equation}\label{eq:RR-via-C}
 \RR_{k}^m = k\cdot\C_{k}^m - (k - 1)\cdot\C_{k + 1}^m ,
\end{equation}
where $m$ is the embedding dimension, $k$ is the
prediction horizon 
and $\C^m_k$ and $\RR^m_k$ denote the correlation sum and
recurrence rate, respectively; for the corresponding definitions see
Section~\ref{sec:rqa}.
The relationship (\ref{eq:RR-via-C}) directly permits to express the determinism
$\DET$ in terms of the correlation sums
\begin{equation}\label{eq:DET-via-C}
 \DET_k^m
  = \frac{\RR_{k}^m}{\RR_{1}^m}
  = \frac{k\cdot\C_{k}^m - (k - 1)\cdot\C_{k + 1}^m} {\C_{1}^m}
  \,.
\end{equation}

The bridging formulas (\ref{eq:RR-via-C}) and (\ref{eq:DET-via-C}) enable us to
derive strong laws of large numbers for the recurrence rate and determinism
from that for the correlation sum, see Theorems~\ref{thm:strong-law-RR} and
\ref{thm:strong-law-DET-LAVG-RATIO}. To this end, however, we need to
generalize (\ref{eq:strong-law-C-intro}) to the case when $d$ is a pseudometric
on $Z$, rather than a metric. For pseudometrics induced by Borel maps,
this problem was studied in \cite[Theorem~2]{Serinko}.
In the general case, (\ref{eq:strong-law-C-intro})
was proved by Manning and Simon \cite{Manning};
for details, see Theorem~\ref{thm:C-strong-law} in Section~\ref{sec:strong-law-c}.

We apply the strong laws to iid processes, Markov chains
and autoregressive processes,
and derive explicit formulas for the recurrence integral, asymptotic determinism
and mean diagonal line length
of these processes; see Table~\ref{tab:formulas} and Section~\ref{sec:iid-mc-ar}.
On simulated data we demonstrate the speed of convergence of RQA quantities when
the length $n$ of (the beginning of) the trajectory goes to infinity.

\begin{table}[htp]
\centering
\begin{tabular}{c||cc}
\hline\hline
  & \textnormal{IID}
  & \textnormal{Markov chain}
\\ [0ex] \hline\hline
  \textnormal{recurrence integral}
  & $\alpha^{k + m - 1} \, [k - (k - 1)\,\alpha]$
  & $\alpha\beta^{k + m - 2} \,[k - (k - 1)\,\beta]$
\\
  \textnormal{asymptotic determinism}
  & $\alpha^{k - 1}\, [k - (k - 1)\,\alpha]$
  & $\beta^{k-1}\, [k - (k - 1)\, \beta]$
\\
  \textnormal{mean diagonal line length}
  & $k+{\alpha}/({1 - \alpha}) $
  & $k+\beta/({1 - \beta})$
\\[0ex]\hline\hline
\end{tabular}
\caption{
Formulas for RQA asymptotics for iid processes and Markov chains; here
$\alpha=\c(r)$ and $\beta=\c_2(r)/\c(r)$.}
\label{tab:formulas}
\end{table}

Further,
in Section~\ref{sec:entropy-and-det} we give an example showing that higher entropy of
a process does not necessarily mean smaller (asymptotic)
determinism and that an iid process
can have higher determinism than a non-iid one with the same one-dimensional
marginals. This is a rather unexpected behavior since, in a sense, entropy and
determinism are opposite notions.

We also discuss the problem of choosing the distance threshold $r$. In the literature,
the distance threshold is selected such that, for the embedding dimension $m$,
the recurrence rate attains a fixed level. This rule, however, can lead to
the existence of the so-called spurious structures in recurrence plots of iid processes,
as noted in \cite{Thiel2}, see also \cite[Section~3.2.4]{Marwan1}.
In Section~\ref{sec:spurious} we show why this happens.
For a large embedding dimension $m$ and distance threshold $r_m$
selected by this rule, the determinism $\det^m_k(r_m)$ is close to one even for iid processes.
Hence, the appearance of spurious structures is a direct consequence of the selection rule
fixing the recurrence rate.

The explicit formula for the asymptotic determinism $\det^m_k$ can be stated in terms
of conditional probabilities that $k$ or $(k+1)$ consecutive recurrences occur
given that one recurrence has occurred; see Theorem~\ref{thm:asympt-det-via-cond-prob}.
Hence, if the process under consideration is a Markov one of order $p$,
then over-embedding to dimension $m\ge p$ leaves the asymptotic determinism unchanged;
see Corollary~\ref{cor:Markov-over-embedding}.
This is demonstrated in Section~\ref{sec:ar} for autoregressive processes.
There we discuss possible use of RQA characteristics for estimation of the
order of such processes.

\medskip

The paper is organized as follows.
In Section~\ref{sec:rqa} we recall the definitions of RQA measures
and we prove (\ref{eq:RR-via-C}),
see Proposition~\ref{prop:RR-via-C}.
The strong laws are stated in Section~\ref{sec:strong-law} and,
in Section~\ref{sec:iid-mc-ar}, they are applied
to iid processes, Markov chains
and autoregressive processes.
Relationship between entropy and asymptotic determinism
is discussed in Section~\ref{sec:entropy-and-det}
and the so-called spurious structures
in recurrence plots of iid processes are explained in Section~\ref{sec:spurious}.
In Section~\ref{sec:strong-law-c} we discuss the strong law for correlation sums on pseudometric spaces.

\section{Recurrence quantification analysis (RQA) and correlation sums}\label{sec:rqa}
In this section we recall the definitions of basic RQA measures and of the correlation sum
for (embedded) trajectories of a general $S$-valued process.
To make the notation easier, we write $x_h^m$, $x_h^\infty$ as a shorthand for
$(x_i)_{i=h}^{h+m-1}$, $(x_i)_{i=h}^\infty$, respectively.

Let $S=(S,\varrho)$ be a metric space.
Fix an integer $m\ge 1$
called the \emph{embedding dimension}. Let $S^m$ be
the \emph{embedding space} of $m$-tuples
$s_0^m$ equipped with a metric ${\varrho}^m$
compatible with the product topology.
Natural choices for ${\varrho}^m$ are the Manhattan ($L_1$), Euclidean ($L_2$)
or Chebyshev ($L_\infty$) metrics,
the latter given by
\begin{equation}\label{eq:dist-max}
\varrho^m(s_0^m,t_0^m) = \max_{0\le j<m} \varrho(s_j,t_j),
\end{equation}
but in general we do not restrict $\varrho^m$ to be one of these.

Let $S^\infty$ denote the space
of all sequences $x_0^\infty$ of points from $S$.
This product space
is usually equipped with a metric, say with
$\varrho^\infty(x_0^\infty,y_0^\infty)=\sum_i 2^{-i}\cdot\min\{1,\varrho(x_i,y_i)\}$. In practice, however, we know just (finite) beginnings of trajectories $x_0^\infty$ and thus
we are not able to compute the distance exactly. That is why we use pseudometrics
instead, depending only on the first members of sequences. For an integer
$k\ge 1$, called the \emph{prediction horizon}, a pseudometric
$d^m_k$ on $S^\infty$ is defined by
\begin{equation}\label{eq:dmk-def}
 d^m_k({x}_0^\infty,{y}_0^\infty) = \max_{0\le i<k} \varrho^m({x}_i^m,{y}_i^m).
\end{equation}
For $k=1$ we write simply $d^m$ instead of $d^m_1$.
Notice that $d^m_k$ depends only on the first $(m+k-1)$ members of ${x}_0^\infty,{y}_0^\infty$.

\subsection{RQA measures}\label{sec:preliminaries-rqa}

Fix a sequence $x=x_0^\infty\in S^\infty$ and consider the embedded trajectory
$\tilde{x}=\tilde{x}_0^\infty$, $\tilde{x}_i=x_i^m\in S^m$.
Fix also a distance threshold $r\ge 0$.
For $i,j\in\NNN$ (here $\NNN$ stands for the set of non-negative integers $\{0,1,\dots\}$)
we say that the couple $(i,j)$
is an \emph{$r$-recurrence (in the $m$'th embedding of the trajectory of $x$)} if
$$
 d^m(x_i^\infty, x_j^\infty)=\varrho^m(x_i^m,x_j^m)\le r.
$$
The \emph{recurrence plot} of dimension $n$ is a square $n\times n$ matrix of
zeros and ones, with the entry at $(i,j)$ ($0\le i,j<n$) equal to one if and only if
$(i,j)$ is a recurrence. Usually, the recurrence plot is
visualized by a black-and-white image,
with black pixels representing recurrences.
Let us note that to construct the $n\times n$ recurrence plot (in the $m$'th embedding)
one needs to know only the first $(n+m-1)$ members $x_0^{n+m-1}$ of $x$.

Diagonal lines are basic patterns in the
recurrence plot. We say that $(i,j)$ is a beginning of
a \emph{diagonal line of length $k\ge 1$} in the $n\times n$ recurrence plot
if the following are true:
\begin{itemize}
  \item $0\le i,j\le n-k$;
  \item $(i+h,j+h)$ is a recurrence for every $0\le h<k$;
  \item either at least one of $i,j$ is equal to $0$ or $(i-1,j-1)$ is not a recurrence;
  \item either at least one of $i+k,j+k$ is equal to $n$ or $(i+k,j+k)$ is not a recurrence.
\end{itemize}
For $0<i,j<n-k$ this is equivalent to
\begin{equation*}
 d^m_k(x_i^\infty,x_j^\infty)\le r,
 \quad
 d^m(x_{i-1}^\infty,x_{j-1}^\infty)>r
 \quad\text{and}\quad
 d^m(x_{i+k}^\infty,x_{j+k}^\infty)>r
 .
\end{equation*}
The number of lines of length $k$ in the $n\times n$ recurrence plot
is denoted by $\L^m_k=\L^m_k(x,n,r)$.
Notice that the main diagonal line (i.e.~the case $i=j$) is not excluded,
thus $\L^m_n(x,n,r)=1$;
further, $\L^m_k(x,n,r)=0$ for every $k>n$.

Now fix the {prediction horizon} $k\ge 1$. The
\emph{$k$-recurrence rate $\RR^m_k$} is the percentage of recurrences
contained in diagonal lines of length at least $k$; that is,
\begin{equation}\label{eq:RR-def}
 \RR^m_k = \RR^m_k(x,n,r)
 = \frac{1}{n^2} \sum_{l\ge k} l\cdot \L^m_l .
\end{equation}
The \emph{$k$-determinism $\DET^m_k$}
is the ratio of the $k$-recurrence rate and $1$-recurrence rate
\begin{equation}\label{eq:DET-def}
 \DET^m_k=\DET^m_k(x,n,r)
 =\frac{\RR^m_k}{\RR^m_1}
\end{equation}
(here and throughout we always assume that the denominator is non-zero; otherwise we leave
 the corresponding quantity undefined).
The \emph{$k$-average line length $\LAVG^m_k$}
is the average length of diagonal lines not shorter than $k$
\begin{equation}\label{eq:LAVG-RATIO-def}
 \LAVG^m_k = \frac{\RR^m_k}{(1/n^2) \sum_{l\ge k} \L^m_l} \,;
\end{equation}
again, this characteristic depends also on $x,n,r$.
For the definitions of other RQA characteristics, such as the (Shannon)
 entropy of diagonal line length, trend or measures based on vertical lines, see
 e.g.~\cite{Marwan1}.

\subsection{Correlation sum}\label{sec:preliminaries-C}
Tightly connected with the recurrence rate is the notion of correlation sum,
studied by
Grassberger and Procaccia \cite{Grassberger1,Grassberger2}
in relation to the correlation dimension.
For a sequence $x=x_0^\infty\in S^\infty$,
the embedding dimension $m$, the prediction horizon $k$,
the distance threshold $r\ge 0$ and $n\ge 1$,
the \emph{correlation sum} is defined by
\begin{equation}\label{eq:Cmk-def}
 \C^m_k=\C^m_k(x,n,r) = \frac{1}{n^2} \,\card\{
   (i,j):\ 0\le i,j\le (n-k), \ d^m_k(x_i^\infty,x_j^\infty)\le r
 \} .
\end{equation}
Here, as above, the quantity depends only on the beginning $x_0^{n+m-1}$ of $x$.
$\C^m_k$ measures the relative frequency of
recurrences (in the $m$'th embedding) followed by at least $(k-1)$ other recurrences.
Since, in a diagonal line of length $l\ge k$, just the first $(l-k+1)$ points are followed
by $(k-1)$ other recurrences, it immediately follows that
\begin{equation}\label{eq:C-and-L}
\C_{k}^m
= \frac{1}{n^2} \, \sum_{l\ge k} (l-k+1) \L^m_l
\end{equation}
for every $m,k\ge 1$. Comparison with (\ref{eq:RR-def})  gives the next
statement.

\begin{proposition}\label{prop:RR-via-C}
For $m,k\geq 1$,
\begin{equation*}
\RR_{k}^m = k\cdot\C_{k}^m - (k - 1)\cdot\C_{k + 1}^m.
\end{equation*}
\end{proposition}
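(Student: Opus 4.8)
The plan is to substitute the two identities already at hand---the definition (\ref{eq:RR-def}) of the recurrence rate and the line-counting formula (\ref{eq:C-and-L}) for the correlation sum---into the right-hand side and simplify. Since $\RR^m_k$ and both correlation sums are expressed as weighted sums of the same line counts $\L^m_l$ over the index set $l\ge k$, the entire argument reduces to a comparison of the coefficients of $\L^m_l$.

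First I would expand the right-hand side using (\ref{eq:C-and-L}) for both $\C^m_k$ and $\C^m_{k+1}$:
\begin{equation*}
 k\cdot\C^m_k - (k-1)\cdot\C^m_{k+1}
 = \frac{1}{n^2}\left[\, k\sum_{l\ge k}(l-k+1)\L^m_l - (k-1)\sum_{l\ge k+1}(l-k)\L^m_l \,\right].
\end{equation*}
The two sums run over slightly different ranges: the first includes the term $l=k$, the second does not, so I would isolate that term before combining.

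Next I would merge the two sums. For every $l\ge k+1$ the coefficient of $\L^m_l$ simplifies to
\begin{equation*}
 k(l-k+1) - (k-1)(l-k) = (l-k)\bigl[\,k-(k-1)\,\bigr] + k = l,
\end{equation*}
while the isolated $l=k$ term from the first sum contributes $k\cdot 1\cdot\L^m_k = k\,\L^m_k$, which also equals $l\cdot\L^m_l$ at $l=k$. Hence the whole expression collapses to $(1/n^2)\sum_{l\ge k} l\cdot\L^m_l$, which is exactly $\RR^m_k$ by (\ref{eq:RR-def}), completing the proof.

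The computation is entirely elementary, and I do not expect a genuine obstacle. The only point requiring a little care is the bookkeeping of the summation ranges---namely, correctly handling the $l=k$ term that is present in $\C^m_k$ but absent from $\C^m_{k+1}$, and verifying that its coefficient $k$ conforms to the general pattern $l$ so that all terms for $l\ge k$ line up uniformly.
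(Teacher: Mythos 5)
Your proof is correct and is essentially the paper's own argument: both rest on substituting the line-count identities (\ref{eq:RR-def}) and (\ref{eq:C-and-L}) and matching the coefficients of $\L^m_l$, the only difference being that you expand the right-hand side $k\cdot\C^m_k-(k-1)\cdot\C^m_{k+1}$ into $\sum_{l\ge k} l\cdot\L^m_l$ while the paper runs the same algebra starting from $n^2\RR^m_k$. Your handling of the boundary term $l=k$ and the coefficient computation $k(l-k+1)-(k-1)(l-k)=l$ are exactly right.
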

\begin{proof}
By (\ref{eq:C-and-L}) and (\ref{eq:RR-def}) we have
\begin{eqnarray*}
n^2 \RR^m_k
&=&
k \left[ n^2\C^m_k - \sum_{l\ge k+1} (l-k+1) \L^m_l \right]
+ \sum_{l\ge k+1} l \L^m_l
\\
&=&
k  n^2\C^m_k - (k-1) \sum_{l\ge k+1} (l-k) \L^m_l
\\
&=&
k  n^2\C^m_k - (k-1) n^2 \C^m_{k+1} ,
\end{eqnarray*}
from which the assertion immediately follows.
\end{proof}

Validity of the previous relation can be also seen from
the following picture
\begin{equation*}
\cdots\circ\underbrace{\bullet\bullet\cdots\bullet}_{a}\underbrace{\bullet\bullet\cdots\bullet}_{b: \abs{b} = k}\circ\cdots
\end{equation*}
of a diagonal line of length $l=\abs{a}+\abs{b}\ge k$.
The $a$-dots are counted in the $k$-recurrence rate as well as in both the $k$
and $(k+1)$-correlation sum.
On the other hand, all of the $b$-dots are counted in the $k$-recurrence rate,
but only the first one is counted in the $k$-correlation sum
and none in the $(k+1)$-correlation sum.
This gives $\RR_{k}^m = k (\C_{k}^m - \C_{k + 1}^m) + \C_{k + 1}^m$, which is equivalent
to the formula from Proposition~\ref{prop:RR-via-C}.

As a corollary of Proposition~1 we can immediately obtain a formula
for the determinism in terms of correlation sums. Since
(\ref{eq:C-and-L}) gives
$(1/n^2)\sum_{l\ge k} \L^m_l=\C^m_{k}-\C^m_{k+1}$, we also
obtain that
\begin{equation}\label{eq:LAVG}
 \LAVG^m_k = k + \frac{\C^m_{k+1}}{\C^m_{k}-\C^m_{k+1}}\,.
\end{equation}

As was noted by many authors,
if the metric $\varrho^m$ in the embedding space is the Chebyshev one (see (\ref{eq:dist-max})),
the embedded recurrence quantities can be expressed in terms of the non-embedded ones.
Let us formulate this as a lemma; there, $\L_k,\C_k,\RR_k$ stand for
$\L^1_k,\C^1_k,\RR^1_k$, respectively

\begin{lemma}\label{lemma:no-embedding}
Let $m,k\ge 1$ and $\varrho^m$ be given by (\ref{eq:dist-max}). Then
\begin{equation*}
 \L^m_k(x,n,r)=\L_{h}(x,n',r),
 \qquad
 \C^m_k(x,n,r)=\left( \frac{n'}{n}   \right)^2 \cdot \C_{h}(x,n',r)
\end{equation*}
and
\begin{equation*}
 \RR^m_k(x,n,r)
 =\left( \frac{n'}{n}   \right)^2 \cdot
  \left[
   \RR_{h}(x,n',r)
   - (m-1) \cdot (\C_{h}(x,n',r)-\C_{h+1}(x,n',r))
  \right]
 ,
\end{equation*}
where $h=k+m-1$ and $n'=n+m-1$.
\end{lemma}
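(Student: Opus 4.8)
The plan is to reduce everything to a single observation about the Chebyshev metric: that the pseudometric $d^m_k$ unfolds into the non-embedded pseudometric $d_h$ with a longer horizon $h=k+m-1$. Concretely, combining (\ref{eq:dmk-def}) and (\ref{eq:dist-max}),
\begin{equation*}
 d^m_k(x_i^\infty,x_j^\infty)
 = \max_{0\le l<k}\ \max_{0\le p<m} \varrho(x_{i+l+p},x_{j+l+p})
 = \max_{0\le q<k+m-1} \varrho(x_{i+q},x_{j+q})
 = d_{h}(x_i^\infty,x_j^\infty),
\end{equation*}
since, as $l$ ranges over $\{0,\dots,k-1\}$ and $p$ over $\{0,\dots,m-1\}$, the sum $l+p$ runs over exactly the contiguous block $\{0,\dots,k+m-2\}$ of $h$ integers. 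I expect this collapse of the nested maxima into one maximum over a contiguous block to be the conceptual heart of the lemma; everything else is bookkeeping.

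Granting the identity, the formula for $\C^m_k$ is immediate. First I would rewrite (\ref{eq:Cmk-def}) with $d^m_k$ replaced by $d_h$; then, using $n-k=n'-h$, the index range $0\le i,j\le n-k$ becomes $0\le i,j\le n'-h$, so the counted set is exactly the one defining $\C_h(x,n',r)$. Dividing by $n^2$ rather than $(n')^2$ produces the factor $(n'/n)^2$, giving $\C^m_k=(n'/n)^2\,\C_h$. This holds for every horizon, so in particular $\C^m_{k+j}=(n'/n)^2\,\C_{h+j}$ for all $j\ge 0$, which I will use repeatedly.

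For $\L^m_k$ I would avoid a direct line-by-line bijection (whose only delicate point is matching the four boundary conditions defining a diagonal line) and instead pass through second differences of the correlation sums. The relation (\ref{eq:C-and-L}) yields $(1/n^2)\sum_{l\ge k}\L^m_l=\C^m_k-\C^m_{k+1}$, and subtracting the analogous identity at $k+1$ gives $\L^m_k/n^2=\C^m_k-2\C^m_{k+1}+\C^m_{k+2}$; the same holds for $\L_h/(n')^2$ in terms of $\C_h,\C_{h+1},\C_{h+2}$. Substituting $\C^m_{k+j}=(n'/n)^2\,\C_{h+j}$ makes the factors $(n'/n)^2$ and $(n')^{-2}$ cancel against $n^{-2}$, leaving $\L^m_k=\L_h$.

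Finally, for $\RR^m_k$ I would apply Proposition~\ref{prop:RR-via-C} twice. In the embedding it gives $\RR^m_k=k\,\C^m_k-(k-1)\,\C^m_{k+1}$, into which I substitute $\C^m_k=(n'/n)^2\C_h$ and $\C^m_{k+1}=(n'/n)^2\C_{h+1}$ to get $\RR^m_k=(n'/n)^2[k\,\C_h-(k-1)\,\C_{h+1}]$. Using $k=h-m+1$ together with the non-embedded instance $\RR_h=h\,\C_h-(h-1)\,\C_{h+1}$, a one-line computation shows $k\,\C_h-(k-1)\,\C_{h+1}=\RR_h-(m-1)(\C_h-\C_{h+1})$, which is precisely the bracketed expression in the statement. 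The only real obstacle is Step~1; the remaining steps are reindexing and the algebra of Proposition~\ref{prop:RR-via-C}.
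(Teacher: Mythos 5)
Your proof is correct, but it runs in the opposite logical order from the paper's. Both arguments hinge on the same key observation, the collapse $d^m_k(x_i^\infty,x_j^\infty)=d_h(x_i^\infty,x_j^\infty)$ under the Chebyshev metric; after that, the paper proves the line-count identity $\L^m_k(x,n,r)=\L_h(x,n',r)$ \emph{first}, directly from the definition of diagonal lines (a combinatorial bijection whose validity rests on the boundary conditions also collapsing: given the interior recurrences, ``$(i-1,j-1)$ is not an embedded recurrence'' reduces to $\varrho(x_{i-1},x_{j-1})>r$, and similarly at the other end), and then deduces the $\C^m_k$ formula from (\ref{eq:C-and-L}) and the $\RR^m_k$ formula by reindexing the sum in (\ref{eq:RR-def}). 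You instead prove the $\C^m_k$ identity first --- which is the easiest of the three to verify directly, being a bare count of pairs with no boundary conditions, using $n-k=n'-h$ --- then recover $\L^m_k=\L_h$ algebraically from the second-difference inversion $\L^m_k/n^2=\C^m_k-2\C^m_{k+1}+\C^m_{k+2}$ of (\ref{eq:C-and-L}), and get $\RR^m_k$ by applying Proposition~\ref{prop:RR-via-C} at horizons $k$ and $h$ plus a one-line rearrangement. What your route buys is precisely the avoidance of the delicate point you identified: the matching of the four boundary conditions in the line bijection, which the paper dismisses as ``immediate'' but which is the only place where a careless reader could slip. What the paper's route buys is the explicit structural fact that diagonal lines in the embedded plot literally \emph{are} the diagonal lines of the non-embedded plot (of length $h$), which is more informative than knowing only that their counts agree; your derivation establishes the count equality without exhibiting the correspondence. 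Both proofs are short and both are valid; yours leans harder on the already-established identities (\ref{eq:C-and-L}) and Proposition~\ref{prop:RR-via-C}, which is legitimate since they precede the lemma.
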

\begin{proof}
By (\ref{eq:dist-max}), $d^m_k(x_i^\infty,x_j^\infty)\le r$
if and only if $\varrho(x_{i+l},x_{j+l})\le r$
for every $0\le l<h$.
Thus the first equality is an immediate consequence of the definition of diagonal lines
and the second one follows from (\ref{eq:C-and-L}).
Further, (\ref{eq:RR-def}) gives
\begin{eqnarray*}
 n^2\RR^m_k(x,n,r)
 &=&
 \sum_{l'\ge h} (l'-(m-1)) \cdot\L_{h}(x,n',r)
\\
 &=&
 (n')^2 \cdot \RR_{h}(x,n',r)
 -(m-1) \cdot \sum_{l'\ge h} L_{h}(x,n',r)
 .
\end{eqnarray*}
Hence, using (\ref{eq:C-and-L}), also the third formula is proved.
\end{proof}

\section{Strong laws for RQA}\label{sec:strong-law}
Here, among other results,
we formulate and prove strong laws of large numbers for the recurrence rate and determinism.
First, the necessary notions and results are summarized.
By a \emph{space} we always mean a topological space.

\subsection{Preliminaries}\label{sec:preliminaries}
Let $Z$ be a space and $\BBB_Z$ be the Borel $\sigma$-algebra on $Z$.
A \emph{(measure-theoretical) dynamical system} is a quadruple $(Z,\BBB_Z,\mu,T)$,
where $\mu$ is a probability measure on $(Z,\BBB_Z)$ and $T:Z\to Z$ is
a (Borel) measurable map which preserves $\mu$, that is, $\mu(T^{-1}(B))=\mu(B)$
for every $B\in \BBB_Z$.
A set $B\in\BBB_Z$ is said to be \emph{$T$-invariant} if $T^{-1}(B)=B$.
We say that $T$ is \emph{$\mu$-ergodic} or that $\mu$ is \emph{$T$-ergodic} if
$\mu(B)\in\{0,1\}$ for every $T$-invariant set $B$.
For $n\in\NNN$, the \emph{$n$-th (forward) iterate $T^n$}
of $T$ is defined recursively by $T^0=\id_Z$
and $T^{n+1}=T\circ T^n$. For $m,n\ge 0$ and $x\in Z$ we write
$T_n^m$ and $T_n^m(x)$ instead of
$(T^i)_{i=n}^{n+m-1}$ and $(T^i(x))_{i=n}^{n+m-1}$, respectively.

Let $\SSS$ be a space with the Borel
$\sigma$-algebra $\BS$.
On the product space $\SSS^\infty$, the Borel $\sigma$-algebra is denoted
by $\BS^\infty$.
An \emph{$\SSS$-valued (discrete time) stochastic process} 
is a sequence $X=X_0^\infty$ of random variables
$X_n:\Omega\to \SSS$ ($n\in\NNN$) defined on a probability space $(\Omega,\BBB,\Prob)$.
The \emph{distribution of the process} $X$
is the measure $\Prmu=\Prmu_X$ on $(\SSS^\infty,\BS^\infty)$ defined by
$\Prmu(F) = \Prob\{X_0^\infty\in F\}$.

The \emph{(left) shift} on $\SSS^\infty$ is the (continuous) map
$T:\SSS^\infty\to \SSS^\infty$ defined by
$$
 T\left(x_0^\infty\right)
 = y_0^\infty,
 \qquad
 \text{where }y_n=x_{n+1} \text{ for every } n\in{\NNN}.
$$
Let $\pi:\SSS^\infty\to \SSS$ denote the projection onto the zeroth coordinate, that is,
$\pi\left(x_0^\infty \right)=x_0$.
If $X$ is a stochastic process with distribution $\mu$, then
the shift $T$ together with the projection $\pi$ and the
measure $\Prmu$ form the \emph{Kolmogorov representation}
of the process $X$. From now on
we always assume that
$X$ is directly given by its Kolmogorov representation, that is,
\begin{equation*}
 (\Omega,\BBB,\Prob)=(\SSS^\infty,\BS^\infty,\mu)
 \quad\text{and}\quad
 X_n=\pi\circ T^n.
\end{equation*}
A process $X$ is \emph{(strictly) stationary}
if its distribution $\Prmu$
is $T$-invariant. The \emph{marginal} of a stationary process $X_0^\infty$ is the distribution
of $X_0$.
A process $X$ is \emph{ergodic}
if every $T$-invariant event has probability either $0$ or $1$.
Thus, a process $X$ is stationary and ergodic if and only if
the dynamical system $(\SSS^\infty,\BS^\infty,\Prmu,T)$ is ergodic.

\subsection{Strong law for correlation sum}\label{sec:correlation-integral}
For a Borel measure $\mu$ on $S^\infty$, $m,k\ge 1$ and $r\ge 0$
define the \emph{correlation integral} $\c^m_k(r)$
by
\begin{equation}\label{eq:cmk-def}
 \c^m_k(r) = \mu\times \mu\{(x,y):\ d^m_k(x,y)\le r\}
  .
\end{equation}
If $\mu$ is the distribution of a process $X_0^\infty$, then
$\c^m_k(r)$
is the probability that, for two independent random
vectors $Y_0^{k+m-1},Z_0^{k+m-1}$ with the distribution equal to that of $X_0^{k+m-1}$,
every $Y_i^m,Z_i^m$ ($i<k$) are $r$-close according to $\varrho^m$:
\begin{equation}\label{eq:cmk-via-prob}
 \c^m_k(r) = \Prmu\left\{
   \varrho^m(Y_i^m,Z_i^m)\le r
   \ \text{for every } 0\le i<k
 \right\}.
\end{equation}

The following theorem, the proof of which is postponed to Section~\ref{sec:strong-law-c},
follows from \cite{Manning}.

\begin{theorem}
\label{thm:Cmk-strong-law}
Let $S$ be a separable metric space,
$X$ be an $S$-valued ergodic stationary process with distribution $\mu$
and $m,k\ge 1$ be integers.
Then for $\mu$-a.e.~trajectory $x\in S^\infty$ of $X$ and for every $r>0$
\begin{equation}\label{eq:Cmk-strong-law}
 \lim_{n\to\infty}\C^m_k(x,n,r) =
 \c^m_k(r) > 0
\end{equation}
provided $\c^m_k$ is continuous at $r$.
\end{theorem}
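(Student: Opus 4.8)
The plan is to deduce this from the general strong law for correlation sums on pseudometric spaces, Theorem~\ref{thm:C-strong-law}, applied to the shift system carrying the pseudometric $d^m_k$. Since $X$ is stationary and ergodic, its Kolmogorov representation $(S^\infty,\BS^\infty,\mu,T)$ is an ergodic dynamical system, and $T^ix=x_i^\infty$, so that $d^m_k(T^ix,T^jx)=d^m_k(x_i^\infty,x_j^\infty)$. The correlation sum attached by Theorem~\ref{thm:C-strong-law} to this system, namely $n^{-2}\card\{(i,j):0\le i,j<n,\ d^m_k(T^ix,T^jx)\le r\}$, therefore agrees with $\C^m_k(x,n,r)$ up to its index range, and its correlation integral is $\mu\times\mu\{(x,y):d^m_k(x,y)\le r\}=\c^m_k(r)$ by \eqref{eq:cmk-def}. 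So the whole statement reduces to checking the hypotheses of Theorem~\ref{thm:C-strong-law} and then reconciling ranges.

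The one hypothesis requiring work is that $d^m_k$ is an admissible pseudometric, i.e.\ one induced by a Borel map into a separable metric space, and I would verify this by factoring $d^m_k$ through the finite block. By \eqref{eq:dmk-def} the value $d^m_k(x_0^\infty,y_0^\infty)$ depends only on the first $(m+k-1)$ coordinates, so $d^m_k(x,y)=\tilde\varrho(\Phi x,\Phi y)$, where $\Phi\colon S^\infty\to S^{m+k-1}$ is the continuous projection $x\mapsto x_0^{m+k-1}$ and $\tilde\varrho(u,v)=\max_{0\le i<k}\varrho^m(u_i^m,v_i^m)$. A short check shows $\tilde\varrho$ separates points, since every coordinate $0,\dots,m+k-2$ is recovered from some block $u_i^m$ with $i<k$; as $\varrho^m$ is compatible with the product topology, $\tilde\varrho$ is a genuine metric generating that topology on the finite product $S^{m+k-1}$, which is separable because $S$ is. Thus $d^m_k$ is induced by the Borel map $\Phi$ into a separable metric space, and Theorem~\ref{thm:C-strong-law} applies. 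It remains to reconcile the index range: \eqref{eq:Cmk-def} restricts to $0\le i,j\le n-k$, and the two counts differ by at most $n^2-(n-k+1)^2$ pairs, so after division by $n^2$ the discrepancy tends to $0$ and both quantities have the common limit $\c^m_k(r)$ at each continuity point $r$. Since $\c^m_k$ is non-decreasing it has at most countably many discontinuities, so Theorem~\ref{thm:C-strong-law} furnishes a single full-measure set of $x$ on which the convergence holds simultaneously at all continuity points.

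For the strict inequality $\c^m_k(r)>0$ when $r>0$, which holds irrespective of continuity, I would use separability of $S^{m+k-1}$ to cover it by countably many $\tilde\varrho$-balls of radius $r/2$; writing $\nu=\Phi_*\mu$, at least one such ball $B$ has $\nu(B)>0$, and any two of its points lie within $\tilde\varrho$-distance $r$, whence $\c^m_k(r)=\nu\times\nu\{(u,v):\tilde\varrho(u,v)\le r\}\ge\nu(B)^2>0$. The genuine mathematical content is entirely contained in Theorem~\ref{thm:C-strong-law} (the pseudometric generalization of the Manning--Simon theorem); once it is in hand, the present statement is bookkeeping. Accordingly, the only points needing care are the verification that $\tilde\varrho$ is a bona fide metric inducing the product topology and that the boundary index terms are asymptotically negligible, and I expect neither to be a serious obstacle.
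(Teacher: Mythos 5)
Your proposal is correct and follows essentially the same route as the paper: both reduce Theorem~\ref{thm:Cmk-strong-law} to Theorem~\ref{thm:C-strong-law} applied to the shift system $(S^\infty,\BS^\infty,\mu,T)$ with the separable (continuous, hence Borel) pseudometric $d^m_k$, and then reconcile index ranges --- the paper via the exact identity $\C^m_k(x,n,r)=\left(\frac{n-k+1}{n}\right)^2\C_d(x,n-k+1,r)$, you via an asymptotically negligible boundary count of at most $n^2-(n-k+1)^2$ pairs, which is the same bookkeeping. Your additional steps (factoring $d^m_k$ through the projection onto $S^{m+k-1}$ to exhibit it as induced by a Borel map into a separable metric space, and re-deriving $\c^m_k(r)>0$ from a countable cover by balls of radius $r/2$) are correct but not needed, since Theorem~\ref{thm:C-strong-law} only requires a separable Borel pseudometric and already asserts the positivity in its conclusion.
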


Notice that $\c^m_k$ is right continuous and non-decreasing,
so it has at most countably many discontinuities; it is
continuous at $r$ if and only if
$\mu\times\mu\{(x,y):\ d^m_k(x,y)=r\}$ is zero.
Further, the convergence in (\ref{eq:cmk-via-prob}) is uniform
over $r$ on any compact interval on which $\c^m_k$ is continuous.

\subsection{Strong laws for RQA}\label{sec:strong-law-rqa}

The purpose of this section is to show that the basic RQA characteristics
converge almost surely to constants, which depend
only on the distribution $\mu$ of the (ergodic) process and on the distance threshold
$r$.
To formulate the results, we introduce the \emph{recurrence
integral $\rr^m_k$}, \emph{asymptotic determinism $\det^m_k$}
and \emph{mean diagonal line length $\lavg^m_k$} for every $r$
by
\begin{equation}\label{eq:RQA-asympt-def}
\begin{split}
 &\rr^m_k(r) = k \cdot\c^m_k(r) - (k - 1) \cdot\c^m_{k+1}(r) ,
\\
 &\det^m_k(r) = \frac{\rr^m_k(r)}{\rr^m_1(r)}
 \qquad\text{and}
\\
 &\lavg^m_k(r) = k + \frac{\c^m_{k+1}(r)}{\c^m_{k}(r) - \c^m_{k+1}(r)}
 \,;
\end{split}
\end{equation}
if $k$ is such that $\c^m_{k}(r) = \c^m_{k+1}(r)>0$ we put $\lavg^m_k(r)=\infty$.
Thus all the quantities are defined for every $r>0$.

Proposition~\ref{prop:RR-via-C} and Theorem~\ref{thm:Cmk-strong-law}
immediately give the following theorem.

\begin{theorem}[Strong law for recurrence rate]\label{thm:strong-law-RR}
Under the assumptions of Theorem~\ref{thm:Cmk-strong-law},
for $\mu$-a.e.~$x\in S^\infty$ and for every (up to countably many) $r>0$,
\begin{equation*}
\lim_{n\to\infty} \RR^m_k(x, n, r) = \rr^m_k(r).
\end{equation*}
\end{theorem}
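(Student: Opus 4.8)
The plan is to derive the statement directly from the linear identity in Proposition~\ref{prop:RR-via-C} together with the strong law for correlation sums, Theorem~\ref{thm:Cmk-strong-law}, applied simultaneously to the two correlation sums $\C^m_k$ and $\C^m_{k+1}$. Since Proposition~\ref{prop:RR-via-C} expresses the finite-$n$ recurrence rate as a fixed linear combination of two finite-$n$ correlation sums, passing to the limit reduces everything to convergence of those two correlation sums.

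First I would fix $m,k\ge 1$ and apply Theorem~\ref{thm:Cmk-strong-law} twice: once to $\C^m_k$, yielding a $\mu$-full-measure set $A_k\subseteq S^\infty$ such that for every $x\in A_k$ and every $r>0$ at which $\c^m_k$ is continuous one has $\C^m_k(x,n,r)\to\c^m_k(r)$; and once to $\C^m_{k+1}$, yielding an analogous full-measure set $A_{k+1}$. Intersecting, I set $A=A_k\cap A_{k+1}$, which is still of full measure. The key point exploited here is that in Theorem~\ref{thm:Cmk-strong-law} the exceptional null set of trajectories does not depend on $r$, so the single full-measure set $A$ serves all admissible thresholds at once.

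Next I would combine the two convergences via the identity. For $x\in A$ and any $r>0$ at which both $\c^m_k$ and $\c^m_{k+1}$ are continuous, Proposition~\ref{prop:RR-via-C} gives $\RR^m_k(x,n,r)=k\,\C^m_k(x,n,r)-(k-1)\,\C^m_{k+1}(x,n,r)$ for every $n$, and letting $n\to\infty$ the right-hand side tends to $k\,\c^m_k(r)-(k-1)\,\c^m_{k+1}(r)$, which is exactly $\rr^m_k(r)$ by the definition in (\ref{eq:RQA-asympt-def}). This establishes the claimed convergence for every $x\in A$ and every such threshold $r$.

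Finally I would account for the phrase ``up to countably many $r$''. Since $\c^m_k$ and $\c^m_{k+1}$ are each non-decreasing in $r$ (as noted after Theorem~\ref{thm:Cmk-strong-law}), each has at most countably many discontinuities, so the set of $r>0$ at which either fails to be continuous is countable; outside this countable set both correlation integrals are continuous and the argument above applies. I do not anticipate a genuine obstacle, which is consistent with the authors' remark that the two cited results ``immediately give'' the theorem: the only care required is the bookkeeping of quantifiers, namely choosing the trajectory null set independently of $r$ and merging the two countable exceptional sets of thresholds into one, after which the conclusion is an immediate consequence of Proposition~\ref{prop:RR-via-C} and Theorem~\ref{thm:Cmk-strong-law}.
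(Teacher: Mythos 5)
Your proof is correct and is exactly the argument the paper intends: the authors state that Proposition~\ref{prop:RR-via-C} and Theorem~\ref{thm:Cmk-strong-law} ``immediately give'' the theorem, and your write-up simply makes explicit the same two steps (applying the correlation-sum strong law to $\C^m_k$ and $\C^m_{k+1}$ on a common full-measure set, then passing to the limit in the identity $\RR^m_k = k\,\C^m_k - (k-1)\,\C^m_{k+1}$), with the correct bookkeeping of the countable exceptional set of thresholds coming from the discontinuities of the two monotone correlation integrals.
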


\begin{theorem}[Strong laws for $\DET$ and $\LAVG$]\label{thm:strong-law-DET-LAVG-RATIO}
Under the assumptions of Theorem~\ref{thm:Cmk-strong-law},
for $\mu$-a.e.~$x\in S^\infty$ and for every (up to countably many) $r>0$,
\begin{equation*}
\lim_{n\to\infty} \DET^m_k(x, n, r) = \det^m_k(r)
\quad\text{and}\quad
\lim_{n\to\infty} \LAVG^m_k(x, n, r) = \lavg^m_k(r)
.
\end{equation*}
\end{theorem}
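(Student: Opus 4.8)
The plan is to reduce both convergences to the strong law for correlation sums, Theorem~\ref{thm:Cmk-strong-law}, since by Proposition~\ref{prop:RR-via-C} and formula~(\ref{eq:LAVG}) every quantity involved is a fixed algebraic combination of the three correlation sums $\C^m_1,\C^m_k,\C^m_{k+1}$. First I would fix $m,k$ and apply Theorem~\ref{thm:Cmk-strong-law} to these three sums simultaneously. For each of them the theorem supplies a $\mu$-full set of trajectories and excludes at most countably many values of $r$; intersecting the three full-measure sets and taking the union of the three (countable) exceptional sets yields a single $\mu$-full set of $x$ and a cocountable set of $r$ for which
\[
 \C^m_1(x,n,r)\to\c^m_1(r),\quad
 \C^m_k(x,n,r)\to\c^m_k(r),\quad
 \C^m_{k+1}(x,n,r)\to\c^m_{k+1}(r),
\]
with all three limits strictly positive. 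On this same set of pairs $(x,r)$, Theorem~\ref{thm:strong-law-RR} also gives $\RR^m_k\to\rr^m_k$ and $\RR^m_1\to\rr^m_1$.

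For the determinism I would simply use $\DET^m_k=\RR^m_k/\RR^m_1$ together with $\rr^m_1(r)=\c^m_1(r)>0$, which follows from~(\ref{eq:RQA-asympt-def}) with $k=1$. Since the denominator converges to a strictly positive constant, the quotient of the two convergent sequences is defined for all large $n$ and converges to $\rr^m_k(r)/\rr^m_1(r)=\det^m_k(r)$. This step is immediate and carries no difficulty.

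The mean line length needs more care and is where the only real obstacle lies. Using~(\ref{eq:LAVG}) I would write $\LAVG^m_k=k+\C^m_{k+1}/(\C^m_k-\C^m_{k+1})$ and recall from~(\ref{eq:C-and-L}) that $\C^m_k-\C^m_{k+1}=(1/n^2)\sum_{l\ge k}\L^m_l\ge 0$. Two cases arise. If $\c^m_k(r)>\c^m_{k+1}(r)$, the denominator converges to the positive number $\c^m_k(r)-\c^m_{k+1}(r)$, so $\LAVG^m_k$ is defined for large $n$ and converges to $k+\c^m_{k+1}(r)/(\c^m_k(r)-\c^m_{k+1}(r))=\lavg^m_k(r)$. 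If instead $\c^m_k(r)=\c^m_{k+1}(r)$ (both strictly positive), then $\lavg^m_k(r)=\infty$ by definition and I must show $\LAVG^m_k\to\infty$: the numerator $\C^m_{k+1}$ tends to $\c^m_{k+1}(r)>0$ while the nonnegative denominator tends to $0$. The delicate point is that the denominator is genuinely positive for all large $n$, so that $\LAVG^m_k$ is well defined and the ratio truly diverges; this holds because $\C^m_{k+1}\to\c^m_{k+1}(r)>0$ forces $\C^m_{k+1}>0$, whence $\sum_{l\ge k}\L^m_l\ge\sum_{l\ge k+1}\L^m_l>0$, for all large $n$. Thus the ratio tends to $+\infty$, matching $\lavg^m_k(r)$, and the proof is complete.
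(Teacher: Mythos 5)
Your proof is correct and takes essentially the same route as the paper: the paper's own proof is a one-line appeal to the fact that a.e.\ convergence is preserved by elementary arithmetic operations (with, for division, a non-zero numerator or denominator), applied to the representations of $\DET^m_k$ and $\LAVG^m_k$ via $\C^m_1,\C^m_k,\C^m_{k+1}$ from Proposition~\ref{prop:RR-via-C} and (\ref{eq:LAVG}), exactly as you do. Your write-up simply makes this explicit, including the edge case $\c^m_k(r)=\c^m_{k+1}(r)>0$ with $\lavg^m_k(r)=\infty$, which the paper subsumes under its ``numerator or denominator non-zero'' proviso.
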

\begin{proof}
The statements follow since
a.e.-convergence  is preserved by elementary arithmetic operations
provided that, for division, the numerator or denominator is non-zero.
\end{proof}

\begin{remark}\label{rem:RATIO-LMAX}
Theorem~\ref{thm:Cmk-strong-law} can be trivially used to derive strong law also for another
RQA quantity called the \emph{$k$-ratio} defined by
$\RATIO^m_k = \DET^m_k/\RR^m_1$.
Further, for the \emph{maximal diagonal line length $\LMAX^m$} defined by
$$
 \LMAX^m=\LMAX^m(x,n,r) = \max\{l<n:\ \L^m_l(x,n,r)>0\}
$$
using Birkhoff ergodic theorem
one can easily show that, under the assumptions of Theorem~\ref{thm:Cmk-strong-law},
$$
 \lim_{n\to\infty} \LMAX^m(x,n,r) = \infty
$$
for $\mu$-a.e.~$x\in S^\infty$ and for every $r>0$.
As a corollary we immediately have that the reciprocal value
$\DIV^m=1/\LMAX^m$ called the \emph{divergence}
converges almost surely to zero.
\end{remark}

\begin{remark}
Recurrence measures as well as correlation sums are often defined using
strict inequalities $d^m_k(x,y)< r$, and/or with excluding the main diagonal $i=j$.
Clearly, the latter has no effect on
asymptotic properties, that is,
Theorems~\ref{thm:Cmk-strong-law}--\ref{thm:strong-law-DET-LAVG-RATIO}
remain true also in this case. When one uses strict inequalities, then
again the results are valid provided strict inequality is used also in
the definition (\ref{eq:cmk-def}) of the correlation integral. The relationship
between this new ``open'' correlation integral and the used ``closed'' one is straightforward,
see \cite[Remark~2.2]{Pesin2}.
\end{remark}

\begin{remark}
As can be seen from Theorem~\ref{thm:C-strong-law}, Theorem~\ref{thm:Cmk-strong-law}
is valid with $d^m_k$ replaced by any separable Borel
pseudometric $d$ on $S^\infty$. For example, $d$ can be defined via order patterns (cf.~\cite{Amigo}):
$d(x_0^\infty,y_0^\infty) = 1$
if $x_0^m,y_0^m$ have the same order pattern, $d(x_0^\infty,y_0^\infty) = 0$ otherwise.
In this way we obtain strong laws for RQA characteristics based on order patterns recurrence plots.
\end{remark}

As for ``empirical'' RQA quantities (see Lemma~\ref{lemma:no-embedding}), the dependence
of asymptotic ones on the
embedding dimension $m$ is straightforward provided the maximum metric is used.

\begin{lemma}\label{lemma:no-embedding-asympt}
Let $m,k\ge 1$, $r\ge 0$ and $\varrho^m$ be given by (\ref{eq:dist-max}).
Then
\begin{equation*}
 \c^m_k=\c_{h},
 \quad
 \rr^m_k = \rr_{h} - (m-1)(\c_{h}-\c_{h+1})
 \quad\text{and}\quad
 \lavg^m_k = \lavg_{h}-(m-1),
\end{equation*}
where $h=k+m-1$.
\end{lemma}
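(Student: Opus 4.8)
The plan is to follow the structure of Lemma~\ref{lemma:no-embedding}, deducing all three asymptotic identities from a single observation about the pseudometrics $d^m_k$. The essential point is that the Chebyshev choice collapses the embedding into a longer prediction horizon, and once this is recorded at the level of the correlation integral, the remaining two formulas are pure bookkeeping with the definitions in (\ref{eq:RQA-asympt-def}).

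First I would show that, when $\varrho^m$ is the Chebyshev metric (\ref{eq:dist-max}), the pseudometric $d^m_k$ coincides with $d_h$ (that is, $d^1_h$) as a function on $S^\infty\times S^\infty$. Unfolding the definitions (\ref{eq:dmk-def}) and (\ref{eq:dist-max}),
\begin{equation*}
 d^m_k(x_0^\infty,y_0^\infty)
 = \max_{0\le i<k}\ \max_{0\le j<m} \varrho(x_{i+j},y_{i+j})
 = \max_{0\le l<h} \varrho(x_l,y_l)
 = d_h(x_0^\infty,y_0^\infty),
\end{equation*}
since the indices $i+j$ range exactly over $\{0,1,\dots,h-1\}$ as $i,j$ run over their respective ranges and $h=k+m-1$. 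Consequently the defining events in (\ref{eq:cmk-def}) for $\c^m_k$ and for $\c_h$ coincide, whence $\c^m_k=\c_h$; replacing $k$ by $k+1$ (which sends $h$ to $h+1$) likewise gives $\c^m_{k+1}=\c_{h+1}$. This is the only nontrivial ingredient.

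For the recurrence integral I would substitute $\c^m_k=\c_h$ and $\c^m_{k+1}=\c_{h+1}$ into $\rr^m_k = k\cdot\c^m_k-(k-1)\cdot\c^m_{k+1}$ and compare with $\rr_h = h\cdot\c_h-(h-1)\cdot\c_{h+1}$; collecting the coefficients of $\c_h$ and $\c_{h+1}$ and using $h=k+m-1$ yields $\rr^m_k=\rr_h-(m-1)(\c_h-\c_{h+1})$. For the mean line length, the same substitution into (\ref{eq:RQA-asympt-def}) gives $\lavg^m_k = k + \c_{h+1}/(\c_h-\c_{h+1})$, whereas $\lavg_h = h + \c_{h+1}/(\c_h-\c_{h+1})$, so subtracting $h-k=m-1$ gives the claim.

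The only point requiring a word of care is the degenerate case $\c_h=\c_{h+1}>0$, in which by convention both $\lavg^m_k$ and $\lavg_h$ are set to $\infty$ in (\ref{eq:RQA-asympt-def}); the identity $\lavg^m_k=\lavg_h-(m-1)$ then holds under the reading $\infty-(m-1)=\infty$. I do not expect any genuine obstacle here: once the pseudometric identity $d^m_k=d_h$ is established, everything reduces to elementary algebra with the defining formulas, exactly paralleling the empirical computation carried out in Lemma~\ref{lemma:no-embedding}.
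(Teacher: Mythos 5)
Your proof is correct and follows essentially the same route as the paper's: the key identity $d^m_k=d_h$ under the Chebyshev metric gives $\c^m_k=\c_h$, and the other two formulas then follow by direct substitution into (\ref{eq:RQA-asympt-def}). The paper's own proof states exactly this (more tersely); your treatment of the degenerate case $\c_h=\c_{h+1}>0$ is a welcome extra detail.
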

\begin{proof}
The first equality follows from (\ref{eq:cmk-def}) and the definition
(\ref{eq:dmk-def}) of $d^m_k$.
The others are then consequences of (\ref{eq:RQA-asympt-def}).
\end{proof}

\subsection{Asymptotic determinism via conditional probabilities}
Here we assume (\ref{eq:dist-max}).
For $h,l\ge 1$ and $r>0$ define the \emph{conditional correlation integral} by
\begin{equation*}
 \c_{l|h} (r)
 = \frac{\c_{h+l}(r)}{\c_{h}(r)}
 = \frac
   {\mu\times\mu\{(y,z):\ d_{h+l}(y,z)\le r \}}
   {\mu\times\mu\{(y,z):\ d_{h}(y,z)\le r \}} \,.
\end{equation*}
Particularly, if $\mu$ is the distribution of an ergodic stationary process $X_0^\infty$
 and
$Y_0^{h+l},Z_0^{h+l}$ are independent random vectors with the distribution equal to that of
$X_0^{h+l}$, then $\c_{l|h} (r)$ is the conditional probability
\begin{equation*}
\c_{l|h} (r)
= \Prmu\left\{
 \varrho^l(Y_h^l,Z_h^l)\le r 
 ~|~
 \varrho^h(Y_0^h,Z_0^h)\le r 
\right\}.
\end{equation*}
Thus, $\c_{l|h} (r)$ is the probability that $h$ consecutive recurrences
are followed by at least $l$ other ones.
In view of this we have the following interesting
expression of asymptotic determinism in terms of conditional probabilities.

\begin{theorem}\label{thm:asympt-det-via-cond-prob}
Under (\ref{eq:dist-max}), the asymptotic determinism
can be expressed via a linear combination of conditional correlation integrals
\begin{equation*}
\det^m_k = k\cdot \c_{k-1|m} - (k-1) \cdot \c_{k|m} .
\end{equation*}
\end{theorem}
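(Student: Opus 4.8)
The plan is to start from the definition of the asymptotic determinism $\det^m_k = \rr^m_k/\rr^m_1$ and unwind everything back to correlation integrals, using the maximum-metric reduction from Lemma~\ref{lemma:no-embedding-asympt}. Under assumption (\ref{eq:dist-max}) that lemma tells us $\c^m_k = \c_h$ where $h = k+m-1$, so all embedded correlation integrals collapse to non-embedded ones with a shifted index. My first step is therefore to rewrite $\rr^m_k$ purely in terms of the $\c_\bullet$. Using $\rr^m_k(r) = k\,\c^m_k - (k-1)\,\c^m_{k+1}$ from (\ref{eq:RQA-asympt-def}) together with $\c^m_k = \c_{k+m-1}$ and $\c^m_{k+1} = \c_{k+m}$, I get
\begin{equation*}
 \rr^m_k = k\,\c_{k+m-1} - (k-1)\,\c_{k+m}.
\end{equation*}

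Next I would handle the denominator $\rr^m_1$. Setting $k=1$ in the same formula gives $\rr^m_1 = 1\cdot\c^m_1 - 0 = \c^m_1 = \c_m$, since the $(k-1)$ coefficient vanishes. So the whole ratio becomes
\begin{equation*}
 \det^m_k = \frac{k\,\c_{k+m-1} - (k-1)\,\c_{k+m}}{\c_m}.
\end{equation*}
The final step is simply to recognize the two ratios appearing here as conditional correlation integrals. By the definition $\c_{l|h} = \c_{h+l}/\c_h$, I have $\c_{k-1|m} = \c_{m+k-1}/\c_m$ and $\c_{k|m} = \c_{m+k}/\c_m$. Dividing the displayed numerator by $\c_m$ term by term then yields exactly $\det^m_k = k\cdot\c_{k-1|m} - (k-1)\cdot\c_{k|m}$, which is the claim.

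This is essentially a bookkeeping computation rather than a conceptual proof, so there is no serious analytical obstacle; the strong-law machinery and the $a.e.$-convergence have already been absorbed into the definitions of $\rr^m_k$ and $\det^m_k$. The one place to be careful is the index alignment, making sure that $h = k+m-1$ is used consistently and that the two conditional integrals have matching conditioning index $m$ (i.e.\ that both numerator terms are divided by the \emph{same} $\c_m$). I would also note in passing that the denominator is well-defined precisely when $\c_m = \rr^m_1 > 0$, which holds under the standing non-degeneracy convention and is guaranteed for $r>0$ by the positivity assertion $\c^m_k(r) > 0$ in Theorem~\ref{thm:Cmk-strong-law}. Given these checks, the identity follows immediately from Lemma~\ref{lemma:no-embedding-asympt} and the definition of $\c_{l|h}$.
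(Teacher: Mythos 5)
Your proof is correct and is exactly the argument the paper intends (the theorem is stated without proof precisely because it is this immediate bookkeeping): apply Lemma~\ref{lemma:no-embedding-asympt} to write $\rr^m_k = k\,\c_{k+m-1}-(k-1)\,\c_{k+m}$ and $\rr^m_1=\c_m$, then divide and invoke the definition $\c_{l|h}=\c_{h+l}/\c_h$. Your index alignment and the remark on positivity of the denominator $\c_m(r)>0$ for $r>0$ are both accurate, so there is nothing to add.
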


Consider now the special case of (ergodic stationary)
Markov processes of order $p\ge 1$. Then for every $m\ge p$ one has
$\c_{l|m}=\c_{l|p}$. That is, over-embedding has no effect on the asymptotic determinism.

\begin{corollary}\label{cor:Markov-over-embedding}
For every (ergodic stationary) Markov process of order $p\ge 1$ and for every
$m\ge p$,
$\det^m_k=\det^p_k$.
\end{corollary}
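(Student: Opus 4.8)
The plan is to read the corollary off Theorem~\ref{thm:asympt-det-via-cond-prob} once the invariance $\c_{l|m}=\c_{l|p}$ announced just before the corollary has been secured for every $l\ge 1$ and every $m\ge p$. Applying the identity $\det^m_k = k\c_{k-1|m}-(k-1)\c_{k|m}$ with $l=k-1$ and $l=k$, and using $\c_{k-1|m}=\c_{k-1|p}$ and $\c_{k|m}=\c_{k|p}$, gives
\begin{equation*}
 \det^m_k = k\,\c_{k-1|m}-(k-1)\,\c_{k|m} = k\,\c_{k-1|p}-(k-1)\,\c_{k|p} = \det^p_k ,
\end{equation*}
the last equality being Theorem~\ref{thm:asympt-det-via-cond-prob} at embedding dimension $p$. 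So the real content is the invariance of the conditional correlation integrals, and this is what I would prove.

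For this I would use the probabilistic form of $\c_{l|h}$. Let $Y,Z$ be independent copies of $X$ and set $A_j=\{\varrho(Y_j,Z_j)\le r\}$; under (\ref{eq:dist-max}) the condition $d_h(Y,Z)\le r$ is exactly $\bigcap_{j=0}^{h-1}A_j$, so
\begin{equation*}
 \c_{l|h} = \Prob\Bigl(\bigcap_{j=h}^{h+l-1}A_j \Bigm| \bigcap_{j=0}^{h-1}A_j\Bigr).
\end{equation*}
The structural point is that the pair $W_j=(Y_j,Z_j)$ is again a stationary Markov process of order $p$, since $Y$ and $Z$ are independent order-$p$ Markov processes, and each $A_j$ depends on the single coordinate $W_j$ through the set $G=\{(y,z):\ \varrho(y,z)\le r\}$.

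Using the order-$p$ Markov property of $W$ I would first condition on the terminal window $W_{h-p}^p=(W_{h-p},\dots,W_{h-1})$: given it, the future recurrences $A_h,\dots,A_{h+l-1}$ are independent of $W_0,\dots,W_{h-p-1}$, so
\begin{equation*}
 \Prob\Bigl(\bigcap_{j=h}^{h+l-1}A_j \Bigm| W_0,\dots,W_{h-1}\Bigr) = \Phi_l\bigl(W_{h-p}^p\bigr),
\end{equation*}
where, by stationarity, $\Phi_l$ does not depend on $h$. Hence $\c_{l|h}$ is the conditional expectation of $\Phi_l(W_{h-p}^p)$ given $\bigcap_{j=0}^{h-1}A_j$. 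To finish, for $h=m\ge p$ I would show that conditioning on all $m$ recurrences $A_0,\dots,A_{m-1}$ leaves the law of the terminal window $W_{m-p}^p$ equal to its law under conditioning on the last $p$ recurrences $A_{m-p},\dots,A_{m-1}$; stationarity then identifies the resulting expectation with $\c_{l|p}$.

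I expect this last reduction to be the main obstacle. The Markov property separates future from past only after an intermediate window of length $p$ is fixed, whereas here one must detach the \emph{law of that window} from the older events $A_0,\dots,A_{m-p-1}$, and conditioning on the recurrence events $\{W_j\in G\}$ is strictly weaker than conditioning on the values $W_j$. Making this precise is exactly where the order-$p$ hypothesis must be used in full: I would pass to the order-one window chain $(W_j^p)_j$ restricted to $G$ and argue that, once $p$ consecutive recurrences have occurred, the terminal window carries all the information about the surviving trajectory relevant to future recurrences, so that the earlier events drop out of the conditional law. Once that conditional-independence step is in hand, the algebraic substitution into Theorem~\ref{thm:asympt-det-via-cond-prob} is immediate.
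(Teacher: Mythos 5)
You take the same route as the paper: Theorem~\ref{thm:asympt-det-via-cond-prob} combined with the invariance $\c_{l|m}=\c_{l|p}$ for $m\ge p$, which is exactly what the paper asserts---without proof---in the sentence preceding the corollary. Your reduction of the corollary to that invariance is correct, and you have located precisely where any proof of the invariance must do real work: the order-$p$ Markov property of the pair process $W_j=(Y_j,Z_j)$ decouples future recurrences from the past only after conditioning on the \emph{values} in the window $W_{m-p}^p$, whereas $\c_{l|m}$ conditions on the \emph{events} $A_j=\{W_j\in G\}$, so one must still show that the older events $A_0,\dots,A_{m-p-1}$ do not alter the conditional law of that window. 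You leave this step as a sketch, so as written the proof is incomplete.

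The more serious issue is that this gap cannot be closed: the step is false, and with it the invariance and the corollary as stated. Conditioning on the events $A_j$ is strictly weaker than conditioning on the values $W_j$, and the conditional law of the terminal window given $\bigcap_{j<m}A_j$ genuinely depends on $m$ (for a finite chain it drifts, as $m$ grows, toward the quasi-stationary law of the diagonal-restricted kernel $Q_{st}=p_{st}^2$). Concretely, take the two-state chain with
\begin{equation*}
P=\begin{pmatrix} 9/10 & 1/10\\ 1/2 & 1/2 \end{pmatrix},
\qquad
\pi=\left(\tfrac{5}{6},\tfrac{1}{6}\right)',
\end{equation*}
the discrete metric and any $r\in[0,1)$; this is an ergodic stationary Markov process of order $p=1$. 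Here $\c_h=\sum \pi_{s_0}^2 p_{s_0s_1}^2\cdots p_{s_{h-2}s_{h-1}}^2$, which gives $\c_1=13/18$, $\c_2=7/12$, $\c_3=853/1800$, $\c_4=34709/90000$. Consequently
\begin{equation*}
\c_{1|1}=\frac{21}{26}\approx 0.8077
\qquad\text{but}\qquad
\c_{1|2}=\frac{853}{1050}\approx 0.8124,
\end{equation*}
so the invariance fails already for $l=1$, $m=2$; moreover
\begin{equation*}
\det^1_2=\frac{2\c_2-\c_3}{\c_1}=\frac{1247}{1300}\approx 0.9592
\qquad\text{while}\qquad
\det^2_2=\frac{2\c_3-\c_4}{\c_2}=\frac{50591}{52500}\approx 0.9636,
\end{equation*}
so over-embedding does change the asymptotic determinism. (The same numbers show that the step $\Prmu\{Y_{k-1}=Z_{k-1}\mid Y_i=Z_i\ \forall\, i<k-1\}=\beta$ in the paper's proof of Proposition~\ref{prop:det-markov} fails for $k\ge 3$, for the same reason.) So your instinct that the last reduction is ``the main obstacle'' is exactly right, but it is insurmountable rather than merely technical: the conditional independence you need holds only under extra structural assumptions on the chain---for instance, when $(\pi_s^2)_s$ is a left eigenvector of $Q$---and not for every ergodic stationary Markov process of order $p$. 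Both your argument and the paper's one-sentence justification founder on this same point.
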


\section{Asymptotic RQA measures for some processes}\label{sec:iid-mc-ar}
Now we present some applications of the asymptotic results obtained in the previous section.
We assume that $S=(S,\varrho)$ is a
separable metric space and $S^\infty$ is
equipped with the pseudometric $d^m_k$ given by (\ref{eq:dmk-def}), where
$m$ is the embedding dimension, $k$ is the prediction horizon and
the embedding metric $\varrho^m$ is given by (\ref{eq:dist-max}).
We also assume that $X_0^\infty$ is
(a Kolmogorov representation of) an ergodic stationary $S$-valued process.
In the following we derive explicit formulas for asymptotic
RQA measures for some classes of processes.
To make the paper self-contained we include here also the proofs, though
the results (at least for correlation integrals) are known.
The convergence is demonstrated by simulation studies.
We start with the simplest case of iid processes.

\subsection{IID processes}\label{sec:iid}

\begin{proposition}\label{prop:det-iid}
Let $X_0^\infty$ be an iid process. Then, for $m,k\ge 1$ and $r \ge 0$,
\begin{equation*}
 \c^m_k(r) = \alpha^{m+k-1} ,
 \qquad\text{where } \alpha=\c(r)
 .
\end{equation*}
Hence
\begin{equation*}
\det^m_k(r) = \alpha^{k-1}\,[k - (k - 1)\,\alpha]
\qquad\text{and}\qquad
\lavg^m_k(r) = k+\frac{\alpha}{1-\alpha}
\end{equation*}
do not depend on the embedding dimension $m$.
\end{proposition}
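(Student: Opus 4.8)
The plan is to first establish the formula $\c^m_k(r)=\alpha^{m+k-1}$ and then read off the expressions for $\det^m_k$ and $\lavg^m_k$ by direct substitution into the definitions (\ref{eq:RQA-asympt-def}). The only substantive computation is the correlation integral itself; everything else is algebra.

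First I would compute the correlation integral using its probabilistic description (\ref{eq:cmk-via-prob}). Let $Y_0^\infty$ and $Z_0^\infty$ be two independent copies of the process. Under the Chebyshev metric (\ref{eq:dist-max}), the condition $\varrho^m(Y_i^m,Z_i^m)\le r$ for every $0\le i<k$ is equivalent to $\varrho(Y_l,Z_l)\le r$ for every $0\le l\le m+k-2$, since the index $i+j$ (with $0\le i<k$ and $0\le j<m$) ranges exactly over $\{0,1,\dots,m+k-2\}$, a set of $m+k-1$ values. This is the step where the maximum metric is essential: a different embedding metric would not split the joint event into coordinatewise conditions.

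Next I would exploit the iid assumption. Because $Y_0^\infty$ and $Z_0^\infty$ are each iid and mutually independent, the pairs $(Y_l,Z_l)$ for distinct $l$ are independent, and each event $\{\varrho(Y_l,Z_l)\le r\}$ has the same probability $\Prmu\{\varrho(X,X')\le r\}=\c(r)=\alpha$, where $X,X'$ are independent with the common marginal. Hence the probability factorizes into $m+k-1$ equal factors, giving $\c^m_k(r)=\alpha^{m+k-1}$. Equivalently, one may invoke Lemma~\ref{lemma:no-embedding-asympt} to reduce to the non-embedded case and simply compute $\c_h(r)=\alpha^h$ with $h=k+m-1$.

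Finally, substituting $\c^m_k(r)=\alpha^{m+k-1}$ into (\ref{eq:RQA-asympt-def}) yields $\rr^m_k(r)=k\alpha^{m+k-1}-(k-1)\alpha^{m+k}=\alpha^{m+k-1}[k-(k-1)\alpha]$ and $\rr^m_1(r)=\alpha^m$, so that $\det^m_k(r)=\alpha^{k-1}[k-(k-1)\alpha]$; the factor $\alpha^m$ cancels, which is precisely why the answer is independent of $m$. Likewise $\lavg^m_k(r)=k+\alpha^{m+k}/(\alpha^{m+k-1}(1-\alpha))=k+\alpha/(1-\alpha)$, again with the $m$-dependence cancelling. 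These are routine simplifications, and I do not expect a genuine obstacle; the only point requiring care is the independence of the pairs $(Y_l,Z_l)$ across $l$ used in the factorization step.
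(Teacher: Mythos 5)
Your proof is correct and follows essentially the same route as the paper: the paper invokes Lemma~\ref{lemma:no-embedding-asympt} to reduce to $m=1$ and then factorizes $\c_k(r)=[\c(r)]^k$ over the independent pairs $(Y_i,Z_i)$, which is exactly your coordinatewise factorization under the Chebyshev metric with the lemma inlined. The algebraic derivation of $\det^m_k$ and $\lavg^m_k$ from (\ref{eq:RQA-asympt-def}) matches the paper's concluding step as well.
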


\begin{proof}
By Lemma~\ref{lemma:no-embedding-asympt} we may assume that $m=1$.
Let $Y_0^k,Z_0^k$ be independent random vectors with the distribution equal to that of $X_0^k$.
Then, for every $r>0$,
\begin{eqnarray*}
\c_k(r)
&=& \Prmu\{\varrho_k(Y_0^k,Z_0^k)\le r\}
= \Prmu\{\varrho(Y_i,Z_i)\le r \text{ for every } 0\le i<k\}
\\
&=& \prod_{0\le i<k}\Prmu\{\varrho(Y_i,Z_i)\le r\}
= [\c(r)]^k .
\end{eqnarray*}
Thus the first statement is proved. The rest follows from
the definitions (\ref{eq:RQA-asympt-def}) of $\det^m_k$ and $\lavg^m_k$.
\end{proof}

\begin{figure}[ht]
  \centering
  \includegraphics[width=4.5cm]{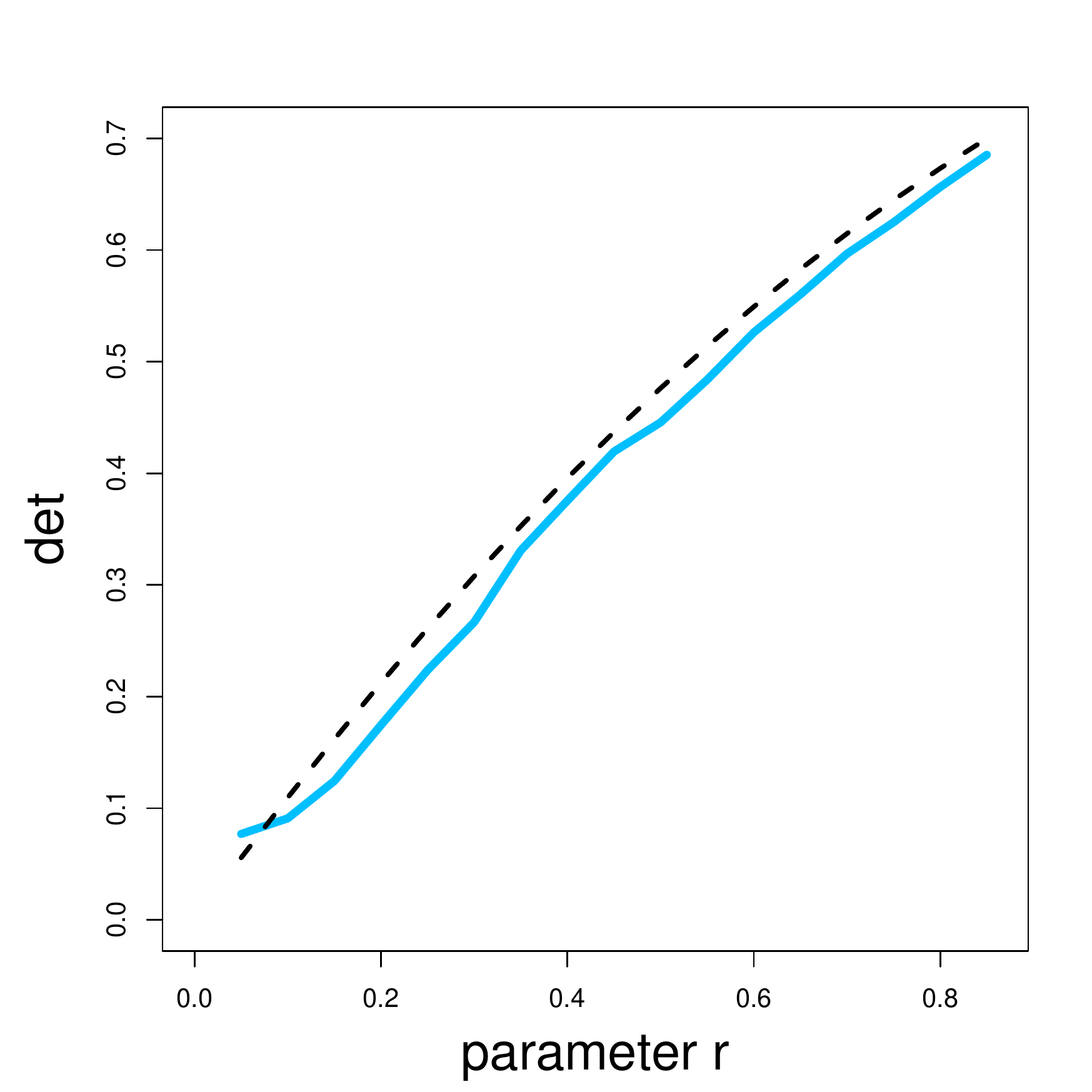}
  \includegraphics[width=4.5cm]{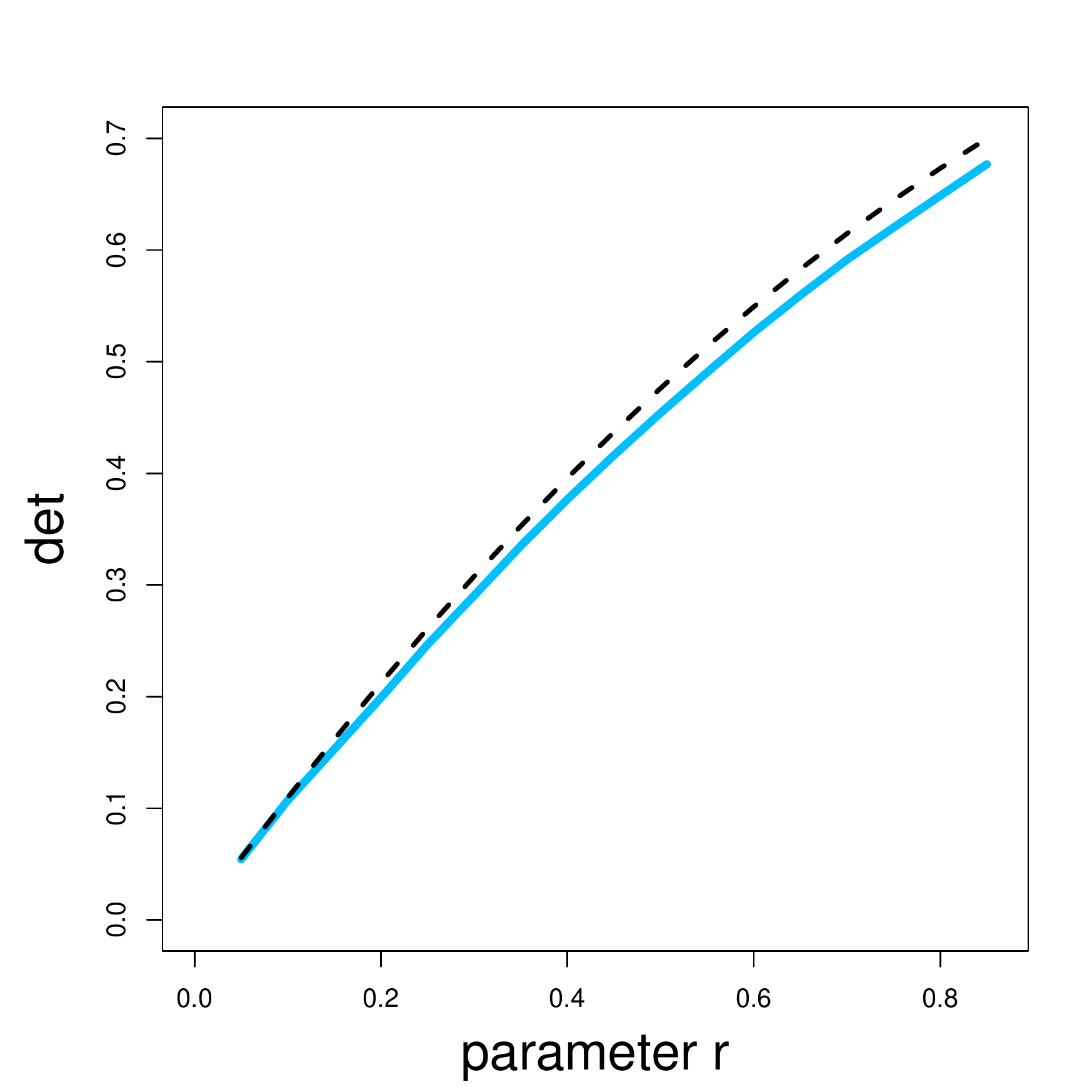}
  \includegraphics[width=4.5cm]{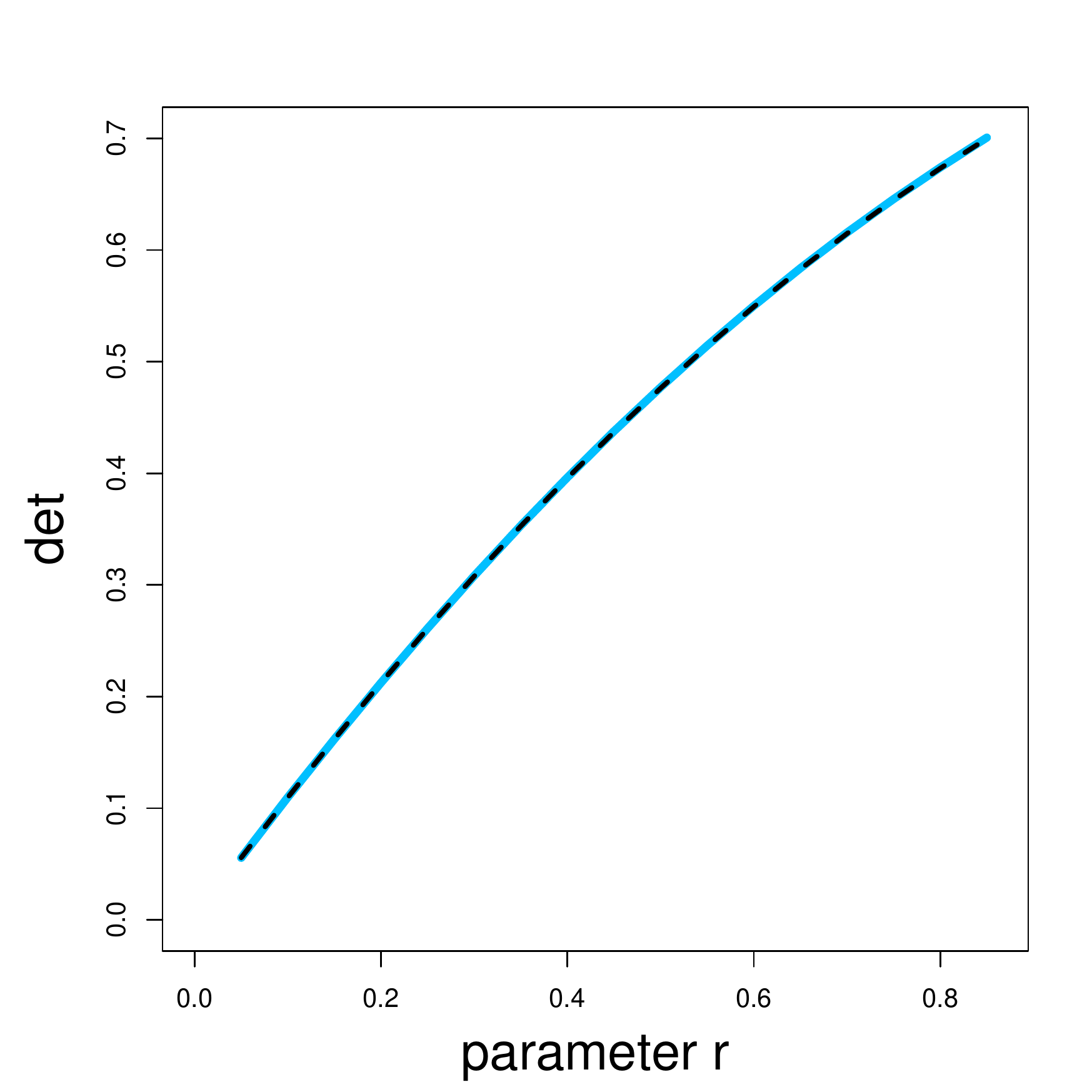}\\
  \caption{Convergence of the empirical determinism (solid line)
    to the asymptotic determinism (dashed line)
    for an iid process with distribution $N(0,1)$;
    $n=100, 1\,000, 10\,000$.
  }
  \label{fig:convergence-iid}
\end{figure}

For example, the asymptotic
determinism of a Gaussian iid process with variance $\sigma^2$  is
$$
 \det^m_k(r) = 2k
 \left[ 2\Phi\left(r'\right) - 1 \right]^{k-1}
 \cdot \left[1-\tfrac{1}{2k} - \Phi\left(r'\right) \right] ,
 \qquad\text{where}\quad
 r'=\frac{r}{\sqrt 2\sigma}
$$
and $\Phi$ is the distribution function of the standard normal distribution.
To see this, use that for iid Gaussian random variables $Y,Z$ with variance $\sigma^2$,
$Y-Z\sim N(0,2\sigma^2)$ and so
$\c(r)=\Prmu\{\abs{Y-Z}\le r\} = 2\Phi\left(r'\right) - 1$.

Figure~\ref{fig:convergence-iid} illustrates the convergence of the empirical determinism
(with $m=1$ and $k=2$) to the asymptotic one for a Gaussian iid process.

\subsection{Markov chains}\label{sec:markov}
Let $S=\{0,1,\dots,q-1\}$ be a finite space equipped with the discrete metric $\varrho$
(that is, $\varrho(x,y)=1$ if $x\ne y$ and $\varrho(x,y)=0$ for $x=y$).
Consider an $S$-valued Markov chain $X_0^\infty$ with the transition matrix
$P=(p_{st})_{s,t=0}^{q-1}$ and the stationary distribution $\pi=(\pi_0,\dots,\pi_{q-1})'$.
Recall that $\pi'P=\pi'$ and
that $X_0^\infty$ is ergodic if and only if the matrix $P$ is transitive or, equivalently,
the probability of the transition from any state $s$ to any state $t$ in a finite time is non-zero.
The formulas for asymptotic values of RQA characteristics of Markov chains
are given in the following proposition. As in the iid case, also here we
can see that both the determinism and mean diagonal line length
do not depend on the embedding dimension.
(Notice that, in this discrete setting, only the
distance threshold $r$ less than $1$ needs to be considered and,
for $0\le r<1$, RQA quantities do not depend
on $r$.)

\begin{proposition}\label{prop:det-markov}
Let $X_0^\infty$ be a finite-valued Markov chain
with the transition matrix $P$ and the stationary distribution $\pi$.
Then, for $r\in[0,1)$,
\begin{equation*}
\c^m_k(r) = \alpha \beta^{k+m-2} ,\qquad
\det^m_k(r)= \beta^{k-1}\, [k - (k-1)\,\beta]
\qquad\text{and}\qquad
\lavg^m_k(r)=k+\frac{\beta}{1-\beta}
\end{equation*}
where $\alpha=\pi'\pi$  and $\beta = (\pi'\textnormal{diag}(PP')\pi) / \alpha$.
\end{proposition}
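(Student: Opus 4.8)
The plan is to reduce everything to the one-dimensional correlation integrals and compute these explicitly. Since $\varrho^m$ is the Chebyshev metric~(\ref{eq:dist-max}), Lemma~\ref{lemma:no-embedding-asympt} gives $\c^m_k=\c_h$ with $h=k+m-1$, so it suffices to establish $\c_h(r)=\alpha\beta^{h-1}$ for every $h\ge1$. Substituting this into the definitions~(\ref{eq:RQA-asympt-def}) and simplifying (exactly as in the proof of Proposition~\ref{prop:det-iid}) then yields the stated formulas for $\det^m_k$ and $\lavg^m_k$; for instance $\rr^m_1=\c_m=\alpha\beta^{m-1}$ and $\rr^m_k=k\c_{k+m-1}-(k-1)\c_{k+m}$, whose quotient is $\beta^{k-1}[k-(k-1)\beta]$, and the telescoping ratio in $\lavg^m_k$ collapses to $\beta/(1-\beta)$.

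First I would turn $\c_h$ into a combinatorial sum. As $\varrho$ is the discrete metric and $0\le r<1$, the inequality $d_h(y,z)\le r$ holds if and only if $y_i=z_i$ for all $0\le i<h$. Hence, by~(\ref{eq:cmk-via-prob}) applied to two independent stationary copies $Y,Z$ of the chain, and using the Markov property and stationarity of each copy,
\[
 \c_h(r)=\Prmu\{Y_i=Z_i,\ 0\le i<h\}
 =\sum_{s_0,\dots,s_{h-1}}\left(\pi_{s_0}\,p_{s_0s_1}\cdots p_{s_{h-2}s_{h-1}}\right)^2 .
\]
Let $w$ be the column vector with entries $w_s=\pi_s^2$, let $Q$ be the matrix with entries $Q_{st}=p_{st}^2$, and let $\mathbf1$ denote the all-ones vector. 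Then the display reads $\c_h(r)=w'Q^{h-1}\mathbf1$. In particular $\c_1(r)=w'\mathbf1=\pi'\pi=\alpha$ and $\c_2(r)=w'Q\mathbf1=\sum_s\pi_s^2\sum_t p_{st}^2=\pi'\textnormal{diag}(PP')\pi=\alpha\beta$, so the claim already holds for $h=1,2$ and pins down the meaning of $\beta$.

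The heart of the argument---and the step I expect to be the main obstacle---is to upgrade this to the exact geometric law $\c_{h+1}(r)=\beta\,\c_h(r)$ for every $h$. Since $\c_h(r)=w'Q^{h-1}\mathbf1$, this is equivalent to showing that $w$ is a left eigenvector of $Q$, that is, $w'Q=\beta w'$, or explicitly $\sum_s\pi_s^2 p_{st}^2=\beta\,\pi_t^2$ for all $t$; then $w'Q^{h-1}\mathbf1=\beta^{h-1}w'\mathbf1=\alpha\beta^{h-1}$. The natural route is to regard $(Y_n,Z_n)$ as a two-particle chain on $S\times S$ whose restriction to the diagonal $\{(s,s)\}$ is the substochastic kernel $Q$; then $\c_h(r)$ is the probability that the pair, started from $\mu\times\mu$, remains on the diagonal for $h$ steps, and the eigenvector identity is precisely the assertion that the conditional law of the shared state along the diagonal is stationary and equal to $w/\alpha$. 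I would try to verify this by one-step conditioning: given coincidence up to time $h-1$ with common value $s$ there, the probability of a further coincidence is $(Q\mathbf1)_s=(PP')_{ss}$, and one must check that averaging this against the along-diagonal state law returns the constant $\beta$. Carrying out (or correctly identifying the structural hypothesis on $P$ behind) this invariance is where the real work lies; once $\c_h(r)=\alpha\beta^{h-1}$ is in hand, the formulas for $\det^m_k$ and $\lavg^m_k$ follow by the substitution described above.
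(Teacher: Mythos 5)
Everything you actually carry out is correct and, up to notation, follows the paper's own route: the reduction to $m=1$ via Lemma~\ref{lemma:no-embedding-asympt}, the identities $\c_1(r)=\pi'\pi=\alpha$ and $\c_2(r)=\pi'\textnormal{diag}(PP')\pi=\alpha\beta$, and the representation $\c_h(r)=w'Q^{h-1}\mathbf{1}$ with $w_s=\pi_s^2$, $Q_{st}=p_{st}^2$. But the step you isolate as the main obstacle and leave open --- the exact geometric law $\c_{h+1}(r)=\beta\,\c_h(r)$ for all $h$ --- is a genuine gap, and in fact it cannot be closed: the law is false for general finite Markov chains. Take $S=\{0,1\}$ and $P=\left(\begin{smallmatrix}9/10 & 1/10\\ 1/5 & 4/5\end{smallmatrix}\right)$, so that $\pi=(2/3,1/3)'$, $\alpha=5/9$ and $\beta=\left[(4/9)(82/100)+(1/9)(68/100)\right]/(5/9)=99/125$. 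A direct evaluation gives $\c_3(r)=w'Q^2\mathbf{1}=394/1125\approx 0.3502$, whereas $\alpha\beta^{2}=1089/3125\approx 0.3485$; equivalently $\c_3/\c_2=394/495\approx 0.796\ne\beta=0.792$. The mechanism is exactly the one you anticipated: conditionally on the whole coincidence history $\{Y_i=Z_i,\ i<h\}$, the law of the common state at time $h-1$ is the normalization of the row vector $w'Q^{h-1}$, which drifts away from $w/\alpha$ as $h$ grows (states with larger $(PP')_{ss}$ get overweighted), so the conditional probability of one further coincidence is $\c_{h+1}/\c_h$, not the constant $\beta$. Your proposed eigenvector identity $w'Q=\beta w'$ is precisely what fails in this example.

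This also means your attempt exposes a flaw in the paper's own proof, which settles the key step in one line: since $X_0^\infty$ is a stationary Markov chain, $\c_k(r)=\Prmu\{Y_{k-1}=Z_{k-1}\mid Y_i=Z_i\ \forall\, 0\le i<k-1\}\cdot\c_{k-1}(r)=\beta\,\c_{k-1}(r)$. The Markov property of the pair chain $(Y_n,Z_n)$ only identifies the conditional probability of a further coincidence given the \emph{state} $(s,s)$ at time $k-2$, namely $(PP')_{ss}$; averaging this over the survival-reweighted state law yields $\c_k/\c_{k-1}$, and equating that ratio with $\beta=\c_2/\c_1$ is unjustified --- and, by the example above, wrong. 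So Proposition~\ref{prop:det-markov} as stated fails in general (already $\c^1_3$, $\det^1_2$ and $\lavg^1_2$ are wrong for the chain above); the correct general statement is $\c^m_k=w'Q^{k+m-2}\mathbf{1}$ together with whatever (\ref{eq:RQA-asympt-def}) yields from it, and the stated geometric formulas hold exactly under an additional hypothesis of the kind you were looking for, e.g.\ $w'Q=\beta w'$, or $(PP')_{ss}$ constant in $s$ (which gives $Q\mathbf{1}=\beta\mathbf{1}$ and covers the iid case, where all rows of $P$ equal $\pi'$). Your diagnosis of where the real work lies is therefore more honest than the paper's proof: the invariance you refused to assert without justification is exactly the point at which the published argument breaks.
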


\begin{figure}[ht!!!]
  \centering
  \includegraphics[width=4.5cm]{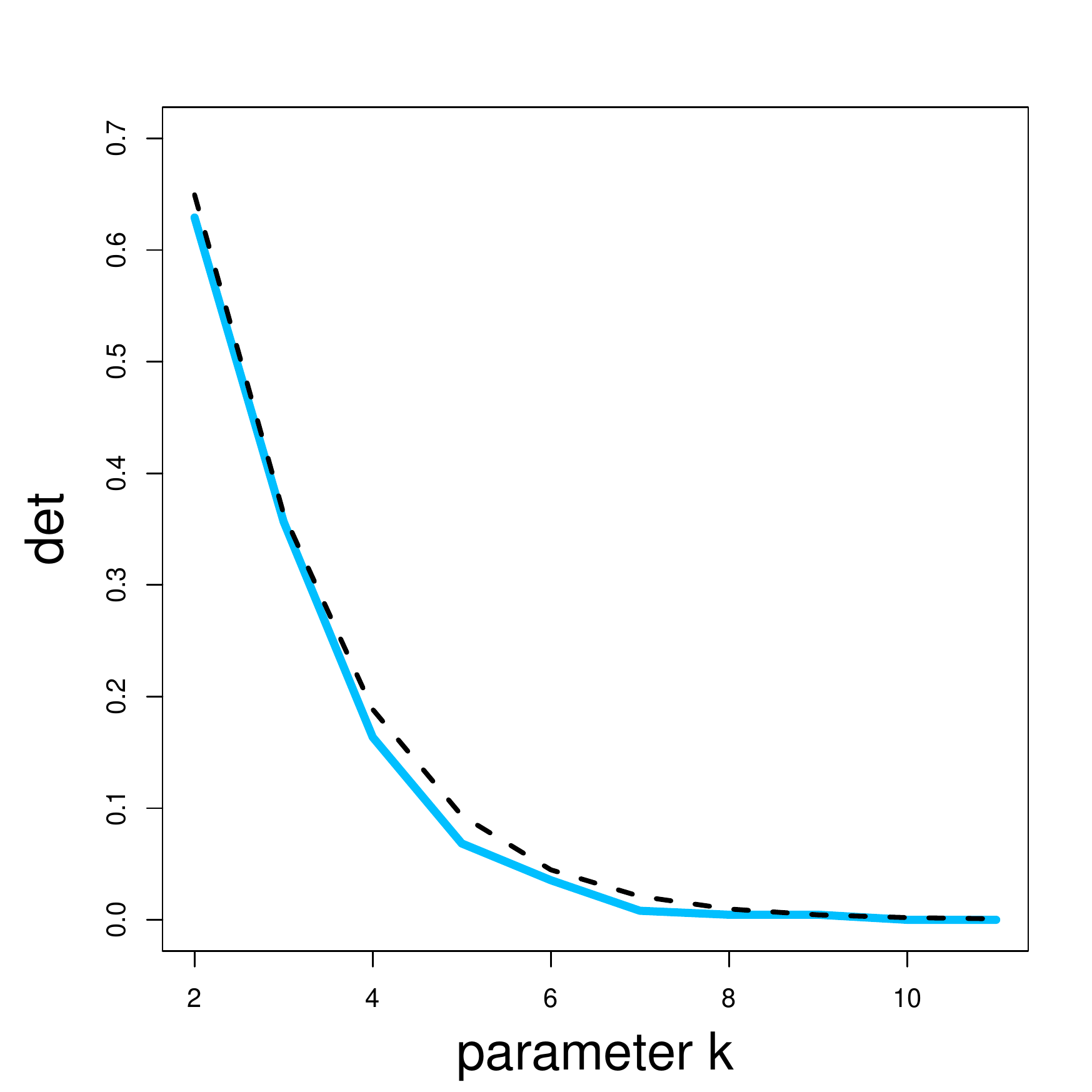}
  \includegraphics[width=4.5cm]{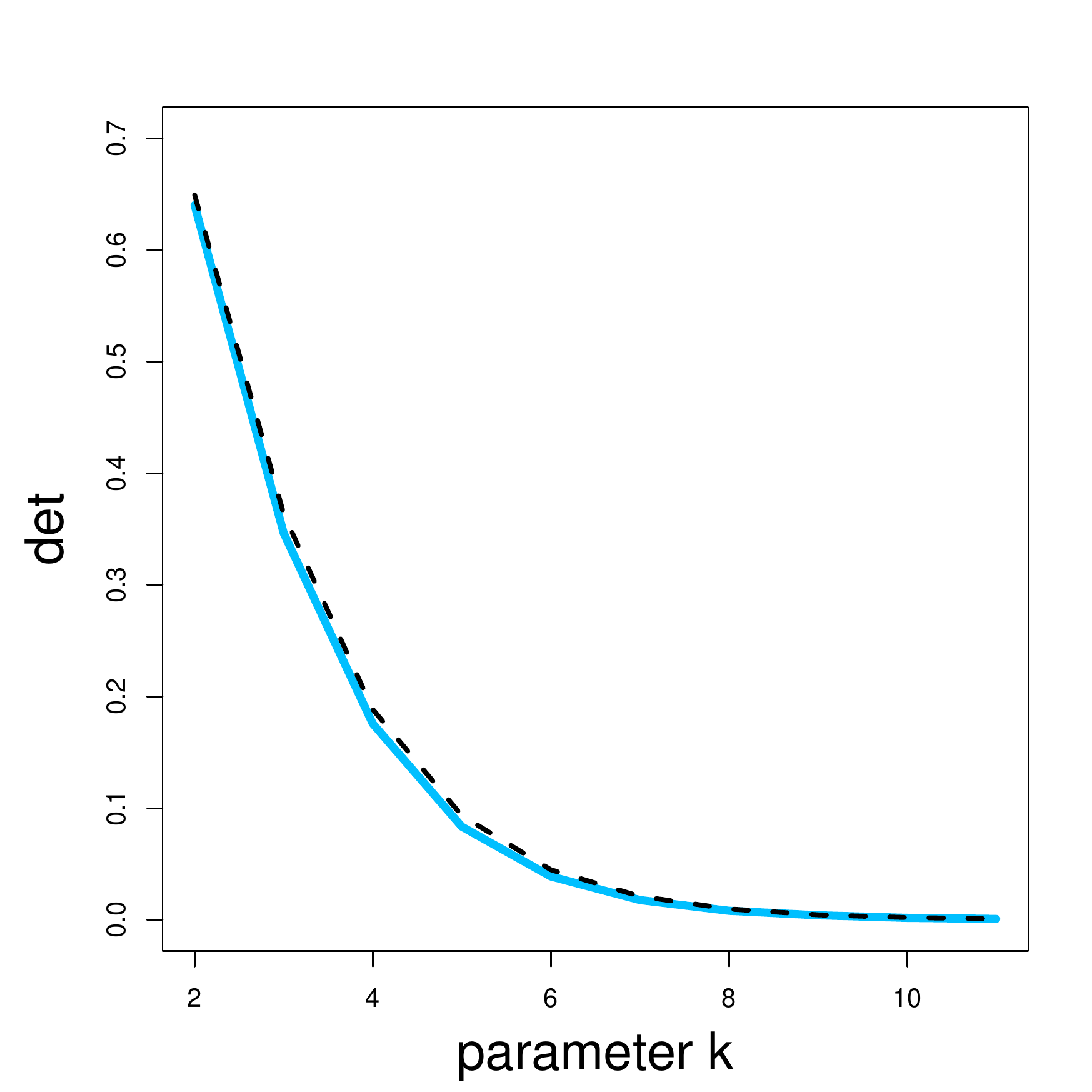}
  \includegraphics[width=4.5cm]{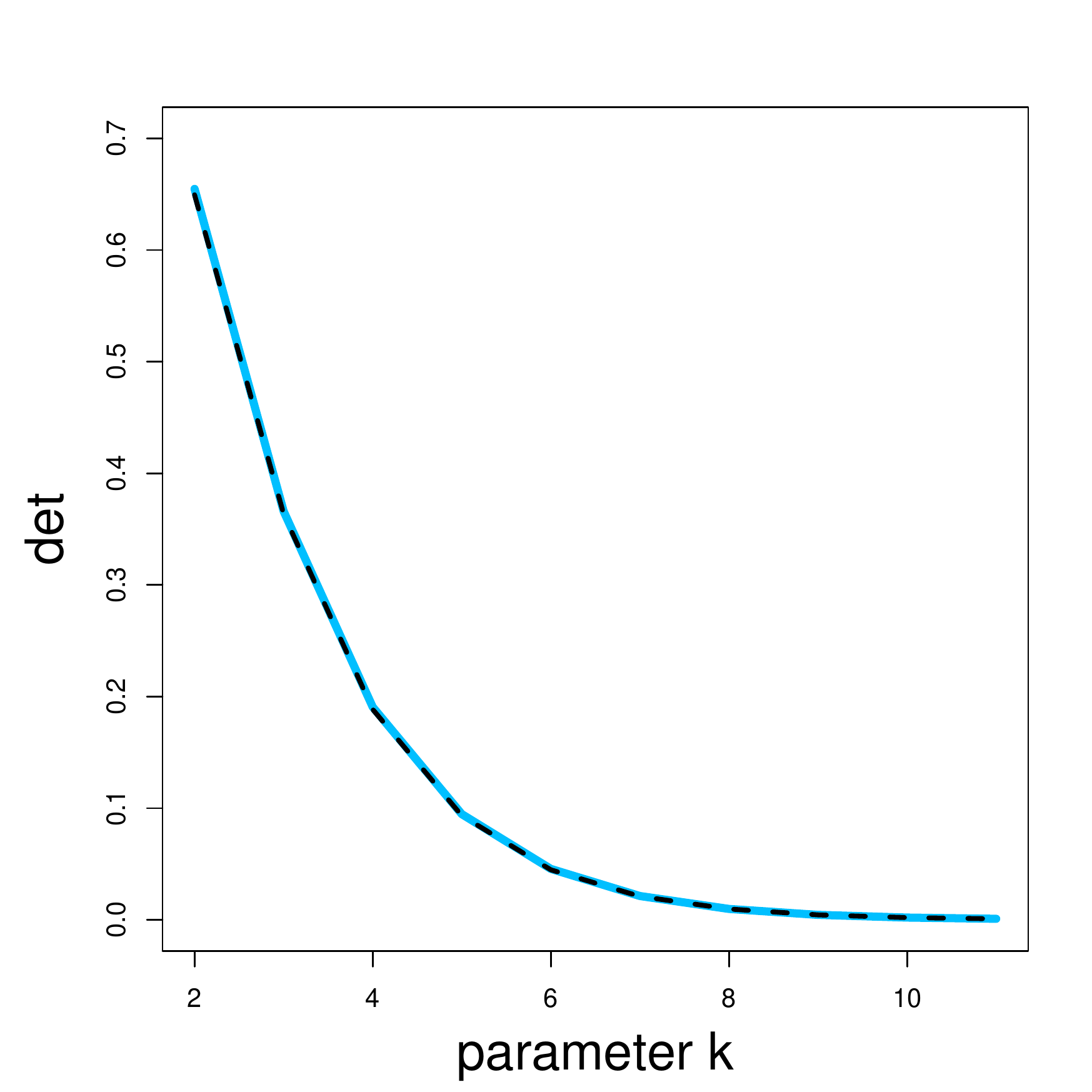}\\
  \caption{Convergence of the empirical determinism (solid line)
    to the asymptotic determinism (dashed line)
    for a $3$--state Markov chain with the transition matrix $P$;
    $n=100, 1\,000, 10\,000$.
  }
  \label{fig:convergence-mc}
\end{figure}

\begin{proof}
Only the equality for $\c^m_k(r)$ needs a proof since the other two follow from
(\ref{eq:RQA-asympt-def}).
We may assume that $m=1$. Let $Y_0^k,Z_0^k$ be independent random vectors with the distribution
equal to that of $X_0^k$. Fix $r<1$ and put $\alpha=\c(r)$; then
$\alpha=\Prmu\{d(Y_0,Z_0)\le r\} = \sum_s \Prmu\{Y_0=Z_0=s\}=\pi'\pi$.

If $k=1$ we are done. So assume that $k\ge 2$ and put
$\beta=\c_{1|1}(r)=\Prmu\{d(Y_1,Z_1)\le r ~|~ d(Y_0,Z_0)\le r\}$.
Then
\begin{eqnarray*}
 \beta
 &=&
 \frac 1\alpha  \,\Prmu\{Y_0=Z_0, Y_1=Z_1\}
 =
 \frac 1\alpha \sum_{s,t} \Prmu\{Y_0=Z_0=s, Y_1=Z_1=t\}
\\
 &=&
 \frac 1\alpha \sum_{s,t} \Prmu\{X_0=s, X_1=t\}^2
 =
 \frac 1\alpha \sum_{s,t} \pi_s^2 \cdot p_{st}^2
 =
 \frac 1\alpha \left(
   \pi'\textnormal{diag}(PP')\pi
 \right) .
\end{eqnarray*}

Since $X_0^\infty$ is a stationary Markov chain, we obtain
\begin{eqnarray*}
 \c_k(r)
 &=&
 \Prmu\{d_k(Y_0^k,Z_0^k)\le r\}
 =
 \Prmu\{Y_i=Z_i \ \forall 0\le i<k \}
\\
 &=&
 \Prmu\{Y_{k-1}=Z_{k-1} ~|~ Y_i=Z_i \  \forall 0\le i<k-1 \} \cdot \c_{k-1}(r)
 =\beta \c_{k-1}(r) .
\end{eqnarray*}
Now a simple induction gives the desired result.
\end{proof}

Figure~\ref{fig:convergence-mc} depicts
the convergence of the empirical determinism (with $m=1$)
to the asymptotic one for a $3$--state Markov chain with the
(randomly selected) transition matrix
$P = \left(
  \begin{array}{ccc}
    0.362 & 0.438 & 0.200 \\
    0.484 & 0.447 & 0.069 \\
    0.120  & 0.503 & 0.377\\
  \end{array}
\right)$.

\subsection{Autoregressive processes}\label{sec:ar}

Next we consider asymptotic RQA characteristics
of a (stationary) autoregressive process $X_0^\infty\sim AR(p)$ of order $p\ge 1$
with coefficients $\theta_i$ ($i=1,\dots,p$) and
with Gaussian zero mean noise $\eps_0^\infty$ of variance $\sigma^2$. It is given by
 $X_{n} = \theta_1\cdot X_{n - 1} + \theta_2\cdot X_{n - 2} +
 \dots + \theta_{p}\cdot X_{n-p} + \varepsilon_{n}$.

\begin{proposition}\label{prop:det-ar}
Let $X_0^\infty$ be an (ergodic stationary) autoregressive process
$AR(p)$ with coefficients $\theta_1,\dots,\theta_p$ and Gaussian $WN(0,\sigma^2)$.
Let $r>0$, $m,k \ge 1$ and $h=k+m-1$. Then
\begin{equation*}
\c_k^m(r) = \Prmu\{Y_0^{h}\in[-r, r]^{h}\},
\end{equation*}
where $Y_0^{h} \sim N(0, \Sigma)$ with $\Sigma$ being the $h\times h$ autocovariance
matrix of an $AR(p)$ process
with coefficients $\theta_1,\dots,\theta_p$ and Gaussian $WN(0,2\sigma^2)$.
\end{proposition}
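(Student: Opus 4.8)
The plan is to reduce to the unembedded case via the maximum metric and then to exploit the Gaussianity of the autoregressive process, so that the required probability becomes an orthant-type Gaussian integral. By Lemma~\ref{lemma:no-embedding-asympt} we have $\c^m_k = \c_h$ with $h = k+m-1$, so it suffices to identify $\c_h(r)$. Reading the correlation integral probabilistically as in (\ref{eq:cmk-via-prob}),
\begin{equation*}
 \c_h(r) = \Prmu\{\abs{U_i - V_i}\le r \text{ for every } 0\le i<h\},
\end{equation*}
where $U_0^h$ and $V_0^h$ are independent random vectors, each distributed as $X_0^h$.

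First I would record that $X_0^h$ is a centered Gaussian vector. Since the noise $\eps_0^\infty$ is zero-mean Gaussian and $X$ is stationary, taking expectations in the defining recursion forces $\E X_n = 0$ (here one uses that stationarity excludes the root $z=1$, so $\sum_i\theta_i\ne 1$); moreover, the stationary solution is an $MA(\infty)$ filter of the Gaussian innovations and is therefore jointly Gaussian. Write $X_0^h \sim N(0,\Sigma_\sigma)$, where $\Sigma_\sigma$ is the $h\times h$ autocovariance matrix of the $AR(p)$ process driven by $WN(0,\sigma^2)$.

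Next, the event $\{\abs{U_i - V_i}\le r \text{ for all } i\}$ is exactly $\{W\in[-r,r]^h\}$ for the difference $W = U_0^h - V_0^h$. Since $U_0^h$ and $V_0^h$ are independent copies of a centered Gaussian, $W$ is Gaussian with $W\sim N(0,2\Sigma_\sigma)$, whence
\begin{equation*}
 \c_h(r) = \Prmu\{W\in[-r,r]^h\}, \qquad W\sim N(0,2\Sigma_\sigma).
\end{equation*}

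The only remaining point, and the sole place where anything specific to autoregressive processes enters, is the identification of $2\Sigma_\sigma$ with $\Sigma$, the autocovariance matrix of the $AR(p)$ process with the same coefficients but innovation variance $2\sigma^2$. This is the key step: the autocovariance of an $AR(p)$ process is linear in the innovation variance. Indeed, from the representation $X_n = \sum_{j\ge 0}\psi_j\eps_{n-j}$ the coefficients $\psi_j$ depend only on $\theta_1,\dots,\theta_p$, so each autocovariance $\gamma(l) = \sigma^2\sum_{j\ge 0}\psi_j\psi_{j+l}$ scales linearly with $\sigma^2$, and doubling the innovation variance doubles every entry of the autocovariance matrix. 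Hence $2\Sigma_\sigma = \Sigma$, so $W$ has the distribution of the vector $Y_0^h$ in the statement, giving the claimed formula. I expect no serious obstacle here; the argument is an assembly of the reduction lemma, the difference-of-independent-Gaussians computation, and the linear scaling of $AR(p)$ autocovariances with the innovation variance.
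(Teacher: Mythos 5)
Your proof is correct and follows essentially the same route as the paper: both reduce $\c^m_k(r)$ to the probability that the difference $W = U_0^h - V_0^h$ of two independent copies of $X_0^h$ lies in $[-r,r]^h$, and both identify the law of $W$ with that of an $AR(p)$ process driven by Gaussian $WN(0,2\sigma^2)$. The only variation is in how that identification is justified: the paper observes in one line that the difference process itself satisfies the same AR recursion with innovations $\eps_n - \eps_n' \sim WN(0,2\sigma^2)$, whereas you verify equality of the covariance matrices through the $MA(\infty)$ representation and the linear scaling of autocovariances in the innovation variance --- an equivalent, slightly more computational argument.
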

\begin{proof}
Since the difference of two independent $AR$ processes with the same parameters
$\theta_1,\dots,\theta_p,\sigma^2$ is an $AR(p)$ process with
the same coefficients and noise variance $2\sigma^2$,
the statement immediately follows from
(\ref{eq:cmk-via-prob}).
\end{proof}

\begin{figure}[ht]
  \centering
  \includegraphics[width=4.5cm]{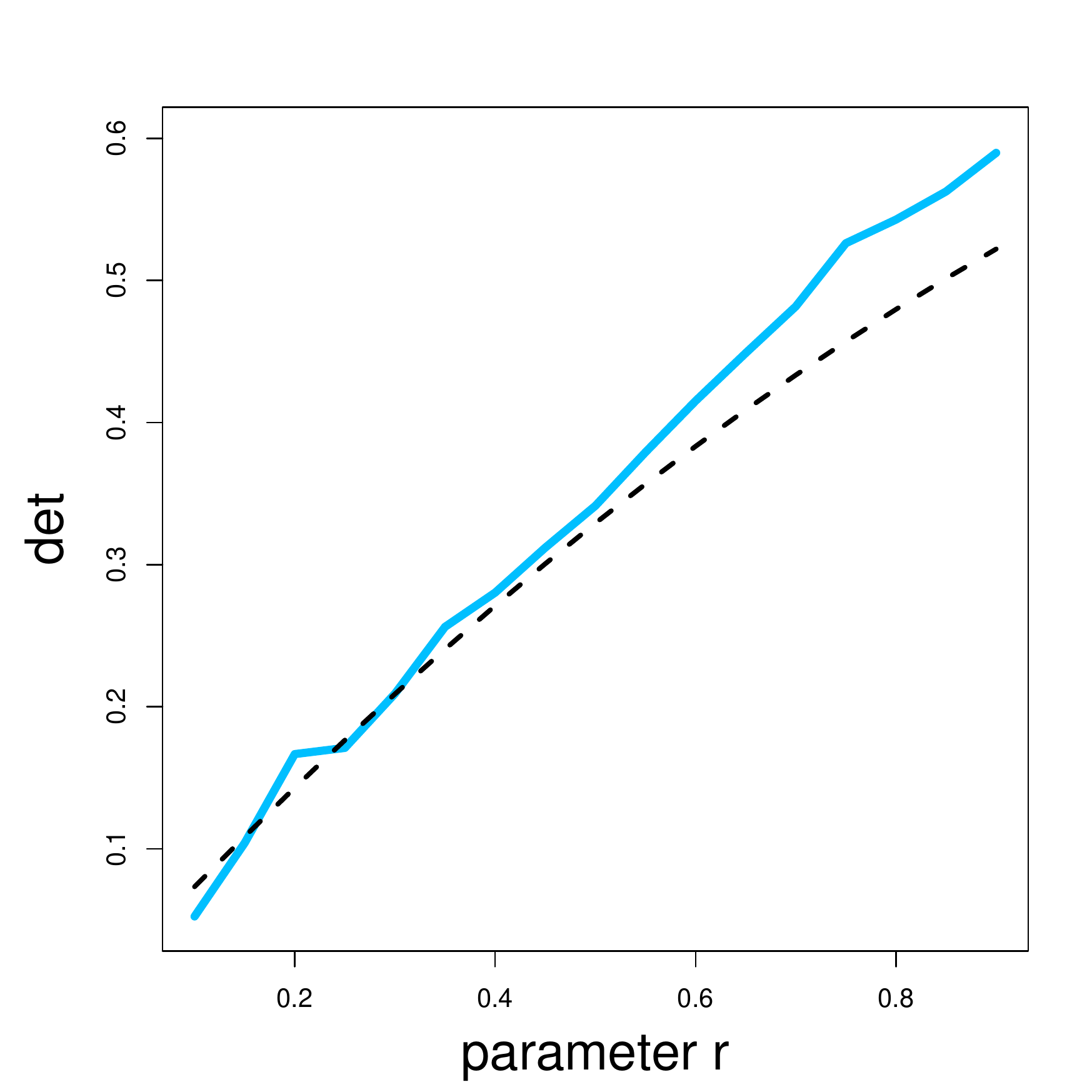}
  \includegraphics[width=4.5cm]{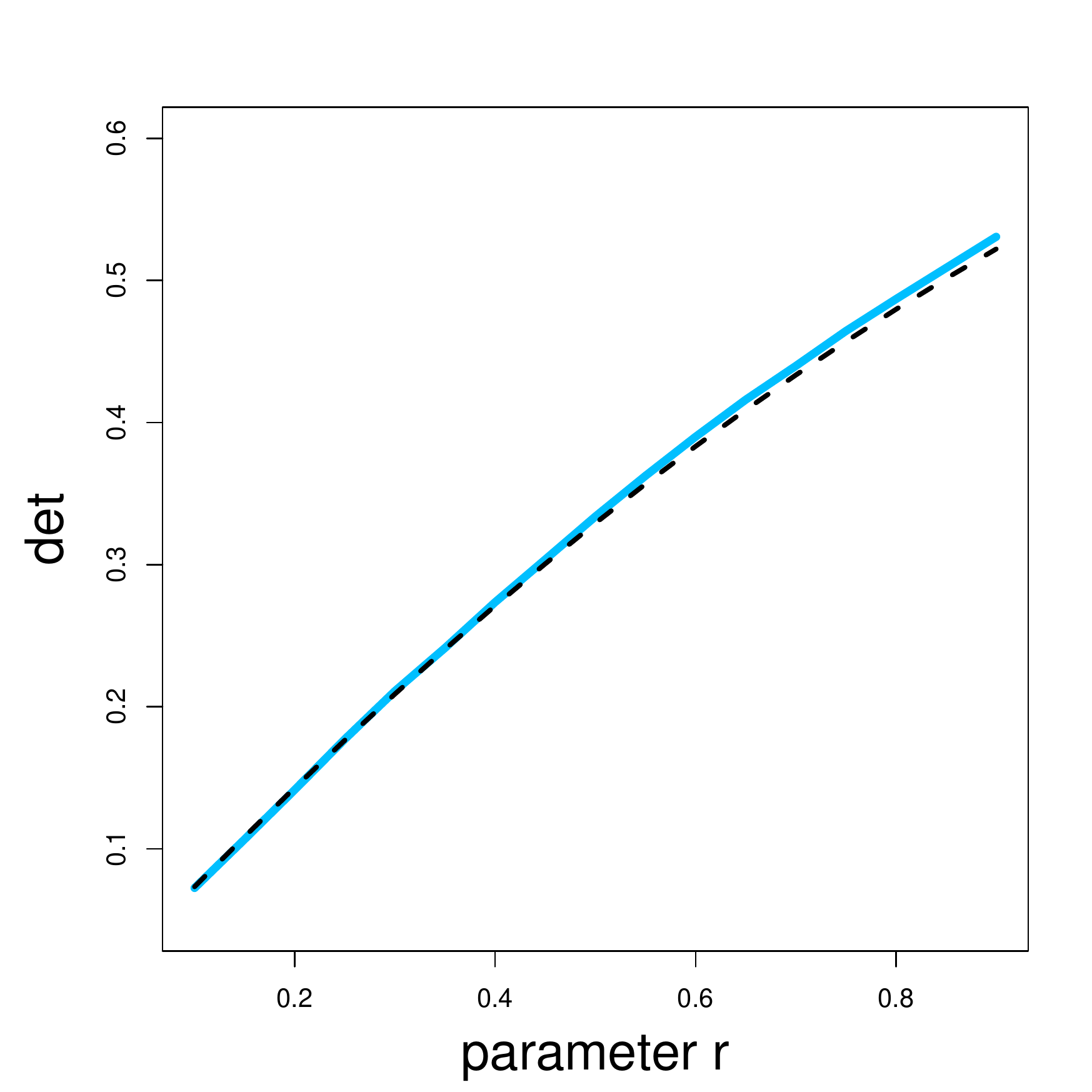}
  \includegraphics[width=4.5cm]{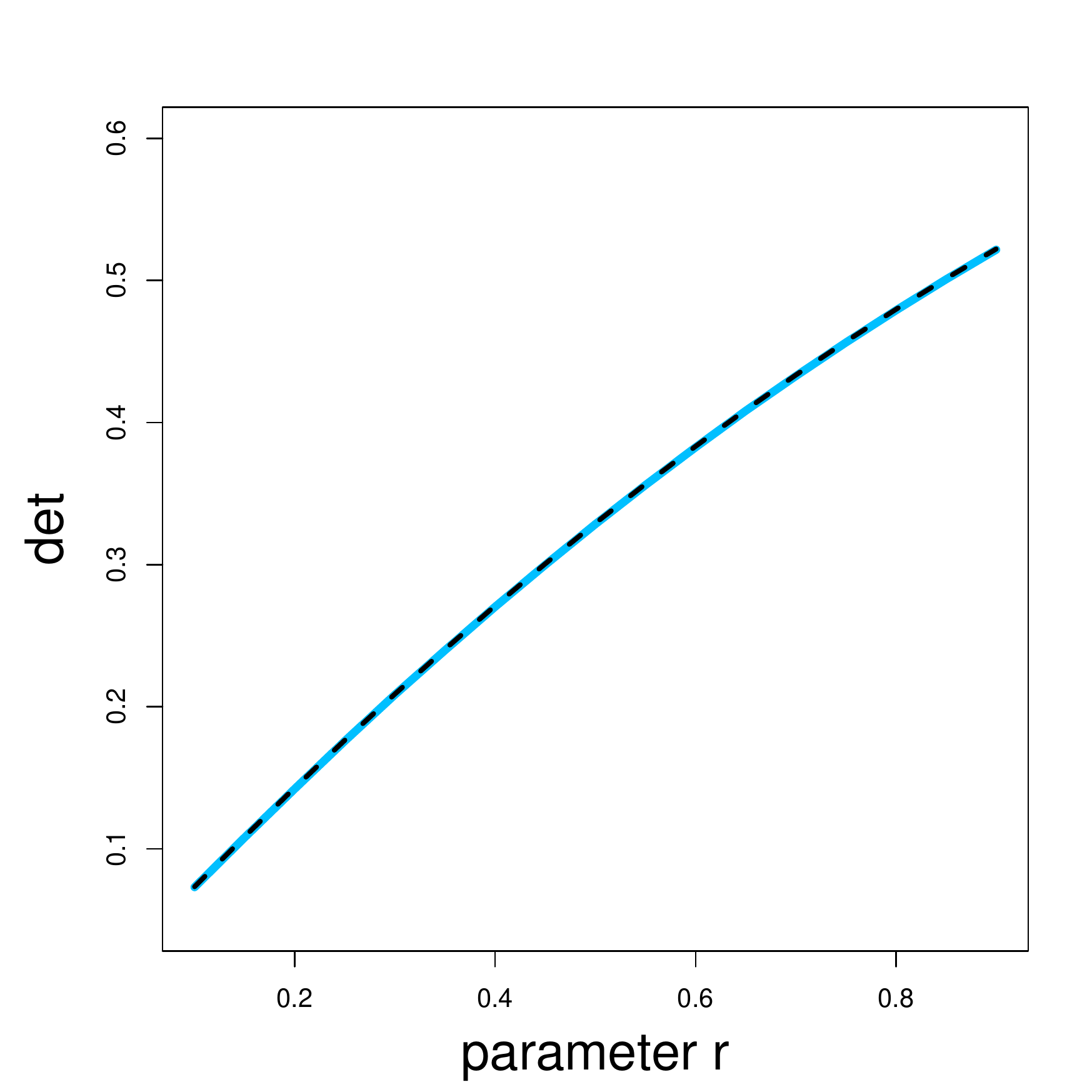}
  \caption{Convergence of the empirical determinism (solid line)
    to the asymptotic determinism (dashed line)
    for an $AR(3)$ process with parameters $a_1 = 0.25, a_2 = 0.4, a_3 = 0.3$ and $\sigma^2 = 1.5$;
    $n=100, 1\,000, 10\,000$.
  }
  \label{fig:convergence-ar}
\end{figure}

The convergence of the empirical determinism $\DET^1_2$
to the asymptotic one for an AR process is exhibited in
Figure~\ref{fig:convergence-ar}.

Corollary~\ref{cor:Markov-over-embedding} implies that over-embedding of an $AR(p)$
process into dimension $m>p$ leaves the determinism unchanged.
Thus, the asymptotic determinism can be used
to estimate (from below) the order of an autoregressive process.
This is demonstrated in Table~\ref{tabulka-ar} on an $AR(3)$ process. There one can see that
embedding into dimension $4$ or $5$ gives the determinism equal to that
for dimension $3$, but the determinisms for $m=1,2$ are smaller.
Thus one can conclude that the order of the process is at least $3$.

\newcommand{\art}[2]{$\stackrel[(#2)]{}{#1}$}
\begin{table}[ht!!!]
\centering
\begin{tabular}{l||ccccc}
\hline\hline
\backslashbox{k}{m}        &  $1$   &  $2$   &  $3$   &  $4$  &  $5$  \\ [0.5ex]
\hline\hline
 $2$   & \art{0.638}{0.009} & \art{0.725}{0.008} & \art{0.759}{0.008} & \art{0.760}{0.008} & \art{0.760}{0.009} \\
 $3$   & \art{0.407}{0.010} & \art{0.491}{0.011} & \art{0.514}{0.012} & \art{0.515}{0.013} & \art{0.516}{0.014} \\
 $4$   & \art{0.258}{0.010} & \art{0.312}{0.011} & \art{0.327}{0.012} & \art{0.328}{0.013} & \art{0.329}{0.014} \\
 $5$   & \art{0.158}{0.008} & \art{0.191}{0.010} & \art{0.201}{0.011} & \art{0.201}{0.012} & \art{0.202}{0.013} \\
[1ex]
\hline\hline
\end{tabular}
\caption{Determinisms for $AR(3)$ process with $a_1 = 0.25, a_2 = 0.4, a_3 = 0.3$ and
 white noise's variance $1.5$; here $n = 2\,500$ and $r = \sqrt{1.5}$.
 Average determinisms with standard errors in parentheses
 obtained by a Monte Carlo simulation of size $1\,000$.
}
\label{tabulka-ar}
\end{table}

\section{Kolmogorov entropy and asymptotic determinism}\label{sec:entropy-and-det}

In the following three examples we
demonstrate that behavior of the RQA determinism can sometimes be counterintuitive. First
we show that the determinism of an iid process can be higher than that of a non-iid one with the
same marginal. In the second example it is shown
that higher entropy does not necessarily mean smaller determinism.
Finally, a Markov chain indistinguishable (from the RQA point of view) from an iid process is constructed.

\begin{example}[Determinism of iid and non-iid processes]\label{ex:markov-iid}
Fix $0<a,b<1$ and consider a $01$-valued Markov chain $X_0^\infty$
with the transition matrix $P=(p_{st})_{s,t=0}^1$ such that $p_{00}=a, p_{11}=b$.
Then $X_0^\infty$ is ergodic and the stationary distribution of it is given by
 $\pi = \left(\frac{1 - b}{2 - a - b}, \frac{1 - a}{2 - a - b}\right)'$. Fix any $r\in [0,1)$.
 By Proposition~\ref{prop:det-markov},
\begin{equation*}
\det^m_k(r)^{\textnormal{Markov}}
=
k\beta^{k-1}-(k-1)\beta^k,
\qquad\text{where}\quad
  \beta=\c_{1|1}(r)=\tfrac{(1 - a)^2\cdot\left[b^2 + (1 - b)^2\right]
  + (1 - b)^2\cdot\left[a^2 + (1 - a)^2\right]}
  {(1 - a)^2 + (1 - b)^2}\,.
\end{equation*}
On the other hand, for a $01$-valued iid process with the same marginal $\pi$,
Proposition~\ref{prop:det-iid} gives
\begin{equation*}
\det^m_k(r)^{\textnormal{iid}}
=k\alpha^{k-1}-(k-1)\alpha^k,
\qquad\text{where}\quad
\alpha= \tfrac{(1 - a)^2 + (1 - b)^2}{(2 - a - b)^2}\,.
\end{equation*}
If we take $a=3/5$ and $b=1/5$, then $\alpha>\beta$. 
Since
the function $x\mapsto k x^{k - 1} - (k - 1) x^k$ is increasing on $[0, 1]$,
we have that
$\det^m_k(r)^{\textnormal{Markov}} > \det^m_k(r)^{\textnormal{iid}}$
for any $m,k\ge 1$.
\end{example}

\begin{figure}[ht]
  \centering
  \includegraphics[width=9cm]{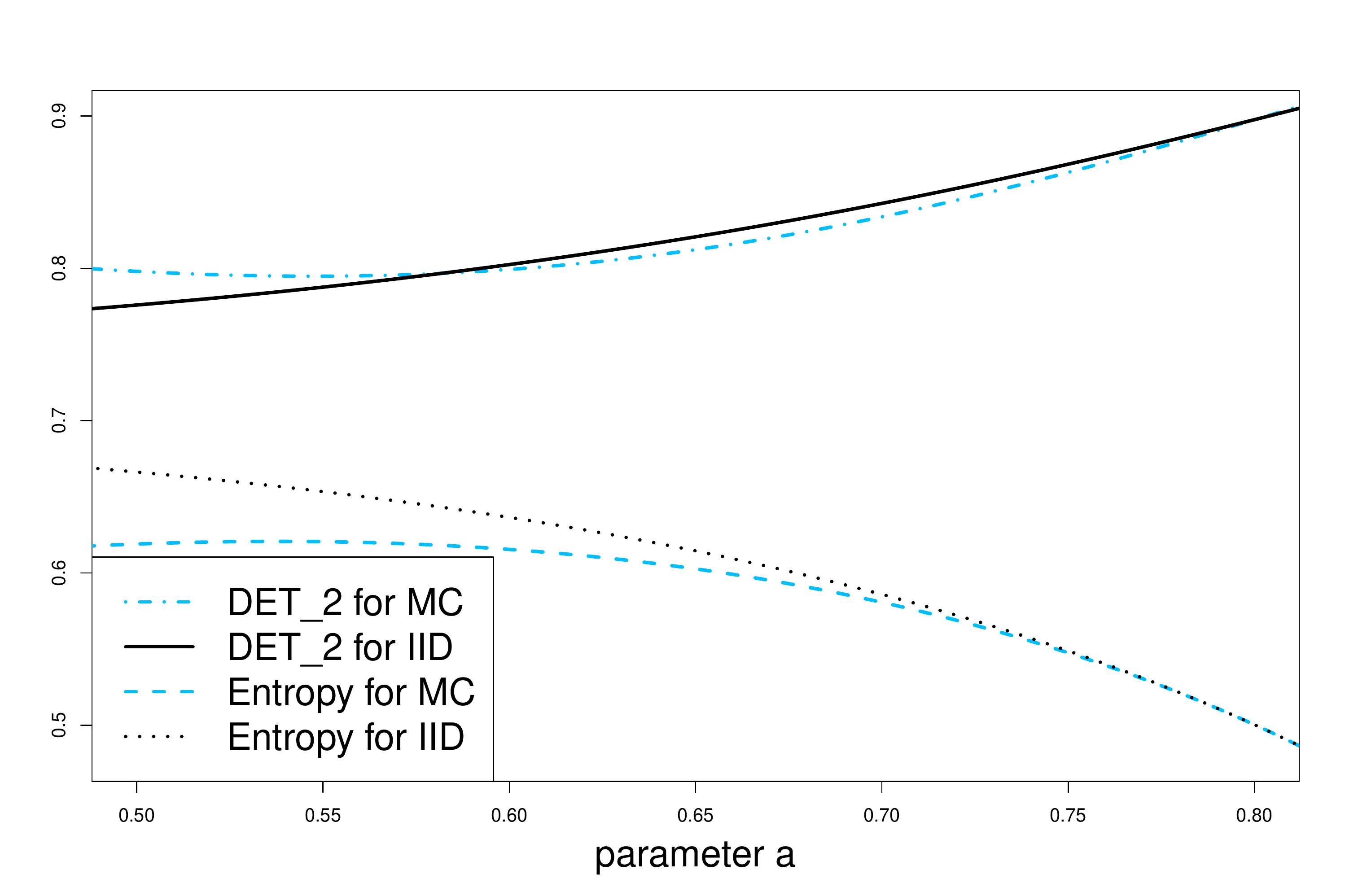}
  \caption{Entropy versus determinism  $\det^1_2$ for a $2$-state Markov chain (MC) with $b=1/5$
  and an iid process with the same marginal.}
  \label{fig:entropy-vs-determinism}
\end{figure}

\begin{example}[Determinism and entropy]
The previous example also shows that higher entropy does not necessarily mean smaller determinism.
In fact, the entropy of an iid process is strictly larger than that of any stationary non-iid process with the same
marginal.
In this simple case the entropies can be calculated
analytically, since the entropy of an iid process is
$h^{\textnormal{iid}}=-\sum_s \pi_s \log \pi_s$ and the entropy of the Markov chain
is $h^{\textnormal{Markov}}=-\sum_{st} \pi_s p_{st}\log p_{st}$.
See also Figure~\ref{fig:entropy-vs-determinism} for an illustration of this phenomenon.
\end{example}

\begin{example}[Indistinguishable Markov chain and iid process]
In the $2$-state Markov chain considered in Example~\ref{ex:markov-iid}, fix $b=1/5$
and, for given $a$, denote by $\alpha_a,\beta_a$ the corresponding correlation
integrals $\c_1(r),\c_{1|1}(r)$, respectively.
Since $\alpha_{1/2}<\beta_{1/2}$ and
$\alpha_{3/5}>\beta_{3/5}$, 
there is $a\in (1/2,3/5)$ with $\alpha_a=\beta_a$. 
For this particular (non-iid) Markov chain $X_0^\infty$, the probability of finding a diagonal line
of length $k$ (in the infinite recurrence plot) is the same as that for an iid process $Y_0^\infty$
with the same marginal. Hence, no RQA measure based
on diagonal lines can distinguish between $X_0^\infty,Y_0^\infty$.
\end{example}

\section{Spurious structures}\label{sec:spurious}

In \cite{Thiel2}, see also \cite[Section~3.2.4]{Marwan1}, it was pointed out that,
for iid processes, over-embedding leads to existence of spurious structures in recurrence plots.
The appearance of spurious structures is illustrated in Figure~\ref{fig:runif}.
The left panel depicts the ``usual'' recurrence plot
of an iid process for the embedding dimension $m=1$.
On the right panel there is the recurrence plot for the embedding dimension $m=250$.
It contains long diagonal lines, which would suggest that the process
should be well predictable.

\begin{figure}[ht]
  \centering
  \includegraphics[width=5cm]{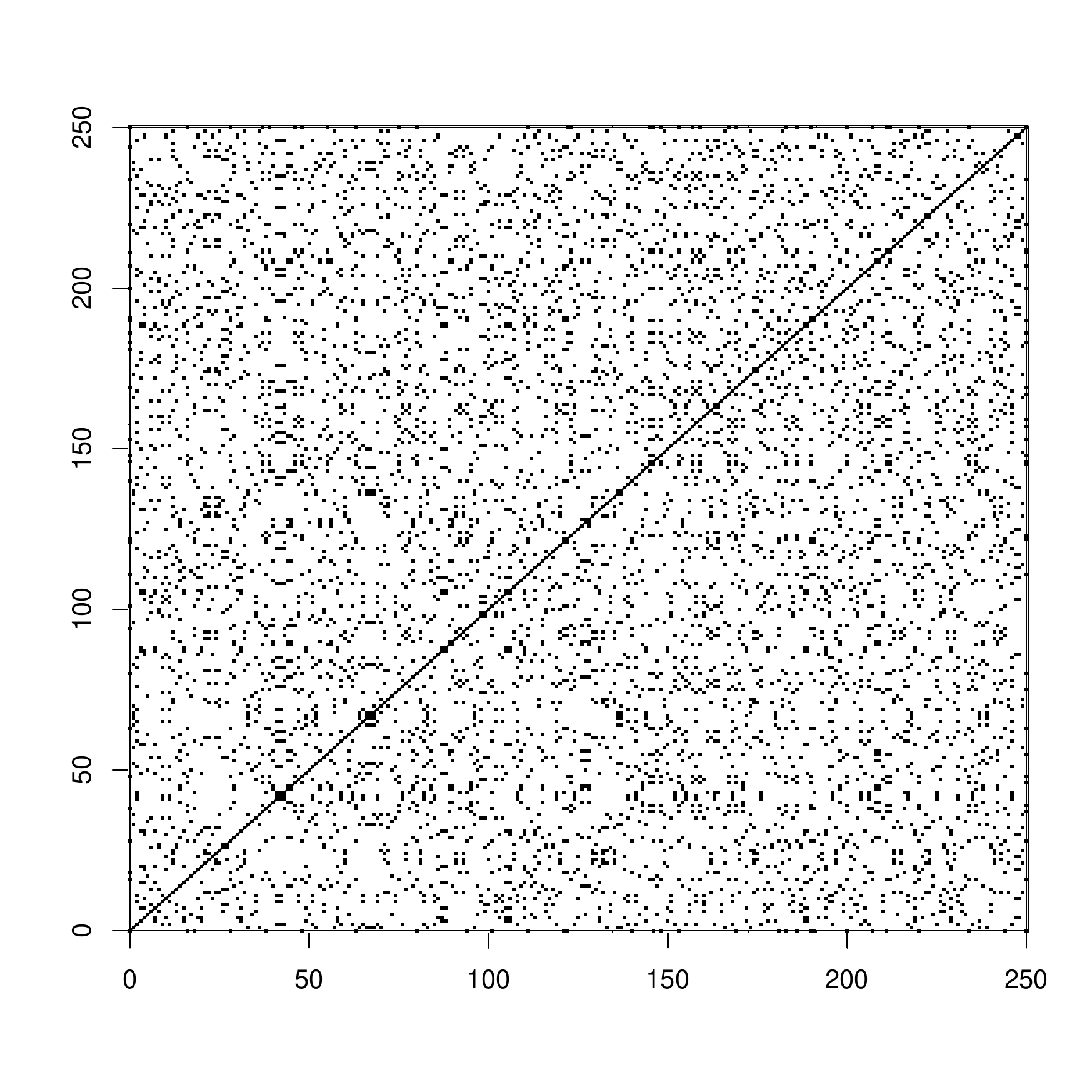}
  \includegraphics[width=5cm]{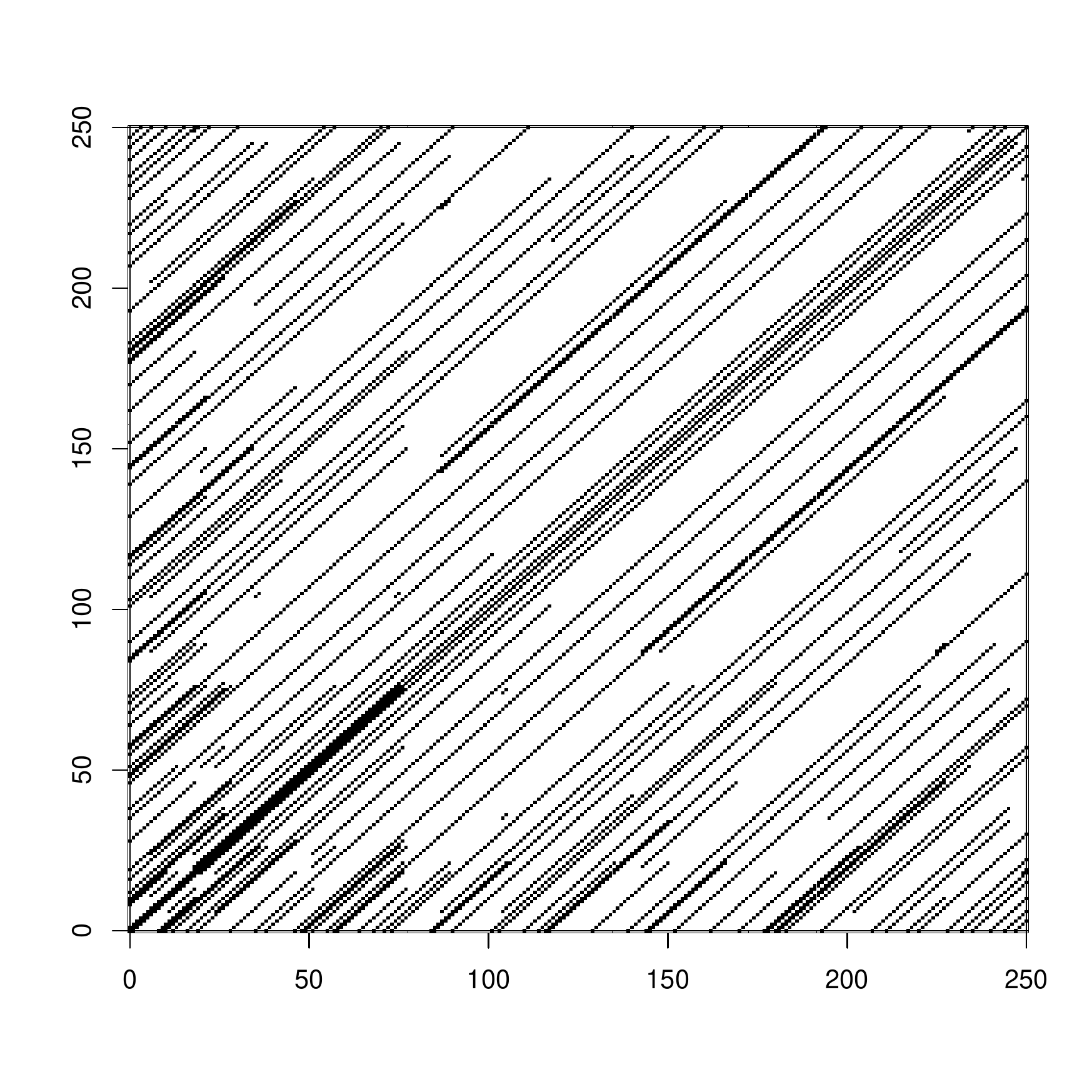}
  \caption{The effect of equal recurrence rates for embedding dimensions
    $m=1$ and $250$; uniform iid data.}
  \label{fig:runif}
\end{figure}

Proposition~\ref{prop:det-iid} enables us to explain why this happens. In fact, this is due to
a special choice
of the distance threshold $r$, which selects such $r=r_m$ that the recurrence rate
$\RR^m_1(r_m)$ is fixed to a predetermined level.
As the following proposition demonstrates, this selection rule leads to the determinism
close to one
and average diagonal line length arbitrarily high
for large embedding dimensions.

\begin{proposition}\label{prop:spurious}
Let $X_0^\infty$ be an iid process. Let $\theta>0$ and let $r_m>0$ ($m\in\NNN$)
be such that all the recurrence rates $\rr^m_1(r_m)$ are equal to $\theta$. Then, for $k\ge 1$,
$$
 \lim_{m\to\infty} \det^m_k(r_m)=1
 \quad\text{and}\quad
 \lim_{m\to\infty} \lavg^m_k(r_m)=\infty
 .
$$
\end{proposition}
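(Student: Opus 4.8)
The plan is to reduce the entire statement to the single fact that the normalization $\rr^m_1(r_m)=\theta$ forces $\c(r_m)\to 1$ as $m\to\infty$. First I would note that, for an iid process, Proposition~\ref{prop:det-iid} together with the definition (\ref{eq:RQA-asympt-def}) of the recurrence integral gives
\begin{equation*}
 \rr^m_1(r)=\c^m_1(r)=\alpha^m,\qquad\text{where }\alpha=\c(r),
\end{equation*}
because the coefficient $(k-1)$ in $\rr^m_k=k\c^m_k-(k-1)\c^m_{k+1}$ vanishes when $k=1$. Writing $\alpha_m:=\c(r_m)$, the hypothesis $\rr^m_1(r_m)=\theta$ becomes exactly the scalar equation $\alpha_m^m=\theta$.

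Next I would read off the asymptotics of $\alpha_m$. Since $\alpha_m=\c(r_m)\in(0,1]$ is a probability, the equation $\alpha_m^m=\theta$ forces $\theta\in(0,1]$ and $\alpha_m=\theta^{1/m}=e^{(\log\theta)/m}\to 1$ as $m\to\infty$, with $\alpha_m\uparrow 1$ from below when $\theta<1$. This limit is the heart of the matter: for any fixed $\alpha<1$ the recurrence integral $\alpha^m$ decays geometrically in the embedding dimension, so keeping it pinned at the level $\theta$ is possible only by raising the threshold until $\c(r_m)$ is arbitrarily close to $1$.

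Finally I would substitute $\alpha=\alpha_m$ into the two explicit iid formulas of Proposition~\ref{prop:det-iid}. For the determinism,
\begin{equation*}
 \det^m_k(r_m)=\alpha_m^{\,k-1}\bigl[k-(k-1)\alpha_m\bigr]\longrightarrow 1^{\,k-1}\bigl[k-(k-1)\bigr]=1,
\end{equation*}
and for the mean diagonal line length,
\begin{equation*}
 \lavg^m_k(r_m)=k+\frac{\alpha_m}{1-\alpha_m}\longrightarrow\infty,
\end{equation*}
since $\alpha_m\to 1^-$ drives the denominator $1-\alpha_m$ to $0$ from above. Both right-hand sides depend on $m$ only through $\alpha_m$, which is exactly why all the $m$-dependence is funneled through the single scalar $\c(r_m)$.

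I expect no serious obstacle: once $\rr^m_1=\alpha^m$ is identified, each limit is a one-line computation. The only point deserving care is the conceptual one highlighted above — that fixing the recurrence rate across embedding dimensions is equivalent to $\alpha_m^m=\theta$ and hence forces $\c(r_m)\to 1$ — which is precisely the mechanism producing the spurious long diagonal lines.
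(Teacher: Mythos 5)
Your proposal is correct and follows essentially the same route as the paper: both reduce the hypothesis to $\alpha_m^m=\theta$ via Proposition~\ref{prop:det-iid}, conclude $\alpha_m=\theta^{1/m}\to 1$, and substitute into the explicit iid formulas (the paper phrases the determinism limit through $\rr^m_k(r_m)\to\theta$ and divides, while you plug into $\det^m_k=\alpha^{k-1}[k-(k-1)\alpha]$ directly, but this is the same computation). Your additional observation that $\theta\in(0,1]$ is a nice touch not made explicit in the paper.
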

\begin{proof}
For $m\ge 1$ put $\alpha_m=\c(r_m)$. Then, by the assumption and Proposition~\ref{prop:det-iid},
$\theta=\rr^m_1(r_m)=(\alpha_m)^m$ for every $m$; thus
$\alpha_m=\theta^{1/m}\to 1$ for $m\to\infty$. Using Proposition~\ref{prop:det-iid} we obtain that,
for every $k\ge 1$, $\rr^m_k(r_m)=(\alpha_m)^{m+k-1}\cdot [k-(k-1)\alpha_m] \to\theta$ for $m\to\infty$,
and so $\lim_{m} \det^m_k(r_m)=1$
and $\lim_{m} \lavg^m_k(r_m)=\infty$.
\end{proof}

Hence, appearance of the spurious structures for iid processes
is an artefact of this particular selection rule for density thresholds.
The artificial ``predictability'' which appears on the right panel of
Figure~\ref{fig:runif} is due to the distance threshold $r$,
which is several times higher than the standard deviation of
the process.
Different selection rule, which chooses $r$ independently of the embedding dimension,
leaves the determinisms $\det^m_k(r)$ and mean diagonal line lengths
$\lavg^m_k(r)$ constant for $m\to \infty$, as
one expects for iid processes.

\section{Strong law for correlation sums on pseudometric spaces}\label{sec:strong-law-c}

Here we give a proof of Theorem~\ref{thm:Cmk-strong-law}, based on the strong law
for correlation sums on pseudometric spaces; see Theorem~\ref{thm:C-strong-law} below.
Recall that, for a (topological) space $Z$, a map
$d:Z\times Z\to \RRR^+$ is a \emph{pseudometric} on $Z$ if
$d(x,x)=0$, $d(x,y)=d(y,x)$ and
$d(x,z)\le d(x,y)+d(y,z)$ for every $x,y,z\in Z$.
A pseudometric $d$ is \emph{separable} if the topology generated by it is separable.
We say that $d$ is a \emph{Borel (continuous) pseudometric} on $Z$ if it is a pseudometric
which is Borel (continuous) w.r.t.~the product topology on $Z\times Z$. Notice
that a continuous pseudometric on a separable space is automatically separable.

If $d$ is a pseudometric on $Z$, $B_d(x,r)$ and $S_d(x,r)$ denote the
(closed) $d$-ball and $d$-sphere with radius $r$ centered at $x$, respectively.
Notice that if $d$ is Borel then $d$-balls and $d$-spheres are
Borel sets in $Z$;
to see it, use that $B_d(x,r)=\{y:\ (x,y)\in d^{-1}([0,r])\}$ and analogously for $S_d(x,r)$.
The $d$-diameter of a set $A\subseteq Z$ is denoted by $\diam_d(A)$.

Assume that $(Z,\BBB_Z,\mu,T)$ is a dynamical system and that
$d$ is a Borel pseudometric on $Z$. For $x\in Z$, $n\in\NNN$ and $r\ge 0$
define the \emph{correlation sum}
\begin{equation*}
 \C_d(x,n,r)
 = \frac{1}{n^2} \card\{
  (i,j):\ 0\le i,j<n,\ d(T^i(x),T^j(x))\le r
 \}
\end{equation*}
and the \emph{correlation integral}
\begin{equation*}
 \c_d(r)
 = \mu\times\mu\{ (x,y):\ d(x,y)\le r\}
 = \int_Z \mu B_d(x,r) \, d\mu(x) .
\end{equation*}
Recall that $\c_d$ is non-decreasing, right continuous and tends to $1$ if $r\to\infty$.
Further, $\c_d$ is continuous at $r$ if and only if $\mu S_d(x,r)=0$ for $\mu$-a.e.~$x\in Z$,
see e.g.~\cite[Remark~2.2]{Pesin2}.

The strong law for the correlation sum was studied
under different conditions in \cite{Pesin1,Pesin2,Aaronson,Serinko,Manning}.
Though not stated in this form, the following theorem
was proved in \cite{Manning}.

\begin{theorem}\label{thm:C-strong-law}
Let $Z$ be a topological space, $\mu$ be a Borel probability  on $Z$
and $T:Z\to Z$ be a $\mu$-ergodic Borel map. Let $d$ be a separable Borel
pseudometric on $Z$.
Then, for $\mu$-a.e.~$x\in Z$ and for every $r>0$,
\begin{equation*}
 \lim_{n\to\infty} \C_d(x, n, r) = \c_d(r)>0
\end{equation*}
provided $\c_d$ is continuous at $r$.
\end{theorem}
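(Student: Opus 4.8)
The plan is to recognize $\C_d(x,n,r)$ as a double empirical average and to deduce its convergence from the weak convergence of the empirical measures $\mu^x_n = \frac1n\sum_{j=0}^{n-1}\delta_{T^j(x)}$ together with a Portmanteau argument. Indeed, $\C_d(x,n,r) = (\mu^x_n\times\mu^x_n)\{(u,v):\ d(u,v)\le r\}$, while $\c_d(r) = (\mu\times\mu)\{(u,v):\ d(u,v)\le r\}$, so the whole statement reduces to passing to the limit in these product measures evaluated on the ball $\{d\le r\}$.

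First I would establish the almost sure weak convergence $\mu^x_n\to\mu$ relative to the $d$-topology. Because $d$ is only separable and Borel, and need not be continuous on $Z$, I would pass to the canonical metric quotient: let $\hat Z = Z/\!\sim$ with $x\sim y$ iff $d(x,y)=0$, let $q:Z\to\hat Z$ be the quotient map and $\hat d$ the induced genuine metric, so that $d = \hat d\circ(q\times q)$ and $(\hat Z,\hat d)$ is separable metric. Since $d$-balls are Borel in $Z$, the map $q$ is Borel, and $\hat\mu := q_*\mu$ is a Borel probability on $\hat Z$. On the separable metric space $\hat Z$ weak convergence is determined by a countable family of bounded continuous functions $\hat g_\ell$; applying Birkhoff's pointwise ergodic theorem to each $g_\ell := \hat g_\ell\circ q\in L^1(\mu)$ and intersecting the countably many resulting full-measure sets yields a single full-measure set $G$ on which $q_*\mu^x_n\to\hat\mu$ weakly, simultaneously for all $x\in G$ and independently of $r$. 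Note that $T$ itself need not descend to $\hat Z$; only the measures are pushed forward, and Birkhoff is applied upstairs on $Z$.

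Next I would transfer to the product. For $x\in G$ the products $q_*\mu^x_n\times q_*\mu^x_n$ converge weakly to $\hat\mu\times\hat\mu$ on $\hat Z\times\hat Z$, since products of weakly convergent sequences converge weakly. The decisive point is that on the quotient the distance $\hat d$ is continuous, so $\hat B_r := \{(\hat u,\hat v):\ \hat d(\hat u,\hat v)\le r\}$ is closed with topological boundary contained in $\{\hat d = r\}$. Continuity of $\c_d$ at $r$ is exactly the statement $(\hat\mu\times\hat\mu)\{\hat d = r\}=0$, which makes $\hat B_r$ a $(\hat\mu\times\hat\mu)$-continuity set. The Portmanteau theorem then gives $(q_*\mu^x_n\times q_*\mu^x_n)(\hat B_r)\to(\hat\mu\times\hat\mu)(\hat B_r)$; since $d = \hat d\circ(q\times q)$, the left-hand side equals $\C_d(x,n,r)$ and the right-hand side equals $\c_d(r)$, proving the convergence for every continuity radius $r$ and every $x\in G$.

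Finally, for positivity I would push to $\hat Z$ again: for $r>0$ every $\hat x$ in the nonempty, full-measure support of $\hat\mu$ satisfies $\hat\mu\,\hat B(\hat x,r)>0$, whence $\c_d(r)=\int \hat\mu\,\hat B(\hat x,r)\,d\hat\mu(\hat x)>0$. The main obstacle is precisely the one the quotient resolves: for a merely Borel $d$ the ball $\{d\le r\}$ is in general neither open nor closed in $Z$, and its boundary is not controlled by $\{d=r\}$, so a direct Portmanteau argument on $Z$ fails; reducing to the metric quotient, where $\hat d$ is automatically continuous, is what allows the continuity hypothesis on $\c_d$ to do its work.
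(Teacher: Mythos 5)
Your proof is correct, but it follows a genuinely different route from the paper's. The paper does not use weak convergence at all: it adapts the partition argument of Manning and Simon \cite{Manning}, observing that their proof needs only the Birkhoff ergodic theorem together with finite Borel partitions $\mathcal{A}^m=\{A^m_j:\ 0\le j\le M_m\}$ satisfying $\mu(A^m_0)\le 2^{-m}$ and $\diam_d(A^m_j)\le 2^{-m}$ for $j\ge 1$, whose existence follows from separability of $d$ and Borel measurability of $d$-balls; the correlation sum is then squeezed between ergodic averages over partition elements, with continuity of $\c_d$ at $r$ absorbing the error terms. You instead factor through the metric quotient $(\hat Z,\hat d)$ and reduce everything to standard weak-convergence theory: Birkhoff applied to a countable convergence-determining family of bounded continuous functions pulled back through the Borel map $q$ (a Varadarajan-type argument for ergodic empirical measures), the product theorem for weak convergence on separable metric spaces, and Portmanteau applied to the $(\hat\mu\times\hat\mu)$-continuity set $\{\hat d\le r\}$. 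This is a noteworthy contrast, because the paper explicitly warns that the pseudometric statement ``cannot be directly derived'' from the metric one by gluing points of zero distance, since $T$ need not descend to $\hat Z$; your argument shows that this objection blocks only the transport of the \emph{dynamics}, not of the \emph{empirical measures}, and pushing forward the measures alone suffices. Your route buys modularity and conceptual clarity --- everything after Birkhoff is textbook material, and the continuity hypothesis on $\c_d$ enters transparently as the null-boundary condition $(\hat\mu\times\hat\mu)\{\hat d=r\}=0$ --- while the paper's route stays entirely on $Z$, needs no weak-convergence machinery, and is self-contained modulo the computation in \cite{Manning}. Both arguments obtain the positivity $\c_d(r)>0$ from separability in essentially the same way: yours via the full-measure support of $\hat\mu$, the paper's via a ball of radius $r/2$ with positive mass and the bound $\c_d(r)\ge\mu(B)^2$.
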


Let us note that this ``pseudometric'' version of the strong law for
correlation sums cannot
be directly derived from the ``metric'' one.
Indeed, it is true that one can
easily obtain a metric space from the pseudometric one by gluing
together points of zero distance, as is usually done. The considered
dynamical system, however, does not necessarily fit to this projection
and so, in general, there is no induced system on the obtained metric space;
take e.g.~the case when $Z=\RRR^\infty$, $T$ is the shift and
$d(x_0^\infty,y_0^\infty)=\abs{x_0-y_0}$.

The proof from \cite{Manning}, however, perfectly fits to this general setting,
as was noted by the authors. Indeed, it is based on the Birkhoff ergodic theorem and on the
existence of finite Borel partitions $\AAA^m=\{A_j^m:\ 0\le j\le M_m\}$ ($m\ge 1$)
with $\mu(A_0^m)\le 2^{-m}$ and $\diam_d(A_j^m)\le 2^{-m}$ for every $j\ge 1$.
Since the former is true for arbitrary ergodic system and the latter immediately follows
from separability of $(Z,d)$ and Borel measurability of $d$-balls,
the convergence in Theorem~\ref{thm:C-strong-law} can be proved using
the same reasoning as in \cite{Manning}. Finally, the fact that
$\c_d(r)>0$ for every $r>0$ is obvious due to separability of $(Z,d)$.
(To see it, take any $d$-ball $B$ with radius $r/2$ and $\mu(B)>0$
and use that $\c_d(r)\ge \mu(B)^2$.)

\smallskip

Next we show how
Theorem~\ref{thm:Cmk-strong-law} can be obtained from Theorem~\ref{thm:C-strong-law}.

\begin{proof}[Proof of Theorem~\ref{thm:Cmk-strong-law}]
Let $(S,\varrho)$ be a separable metric space,
$m,k\ge 1$ be integers and $r>0$ be such that $\c^m_k$ is continuous at it.
Let $\varrho^m$ be a metric on $S^m$ compatible with the product topology.
Put $Z=S^\infty$ and define $d=d^m_k$ by (\ref{eq:dmk-def}).
Then obviously $d$ is a continuous pseudometric on $Z$; it is separable
due to separability of $Z$.

Let $X_0^\infty$ be an $S$-valued ergodic stationary process with distribution $\mu$.
We may assume that $X_0^\infty$ is given by its Kolmogorov representation, that is,
$X_n=\pi\circ T^n$, where $T:Z\to Z$ is the shift and $\pi:Z\to S$ is the projection
$x_0^\infty\mapsto x_0$. Then, for every $x=x_0^\infty\in Z$ and every $n$,
$T^n(x)=x_n^\infty$ and so
\begin{equation*}
 \c^m_k(r) = \c_d(r)
 \qquad\text{and}\qquad
 \C^m_k(x,n,r) = \left(\frac{n-k+1}{n}\right)^2 \C_d(x,n-k+1,r).
\end{equation*}
Application of Theorem~\ref{thm:C-strong-law} to the ergodic system
$(Z,\BBB_Z,\mu,T)$ gives the desired result.
\end{proof}

\subsection*{Acknowledgments.}
The authors gratefully acknowledge a substantive feedback from Lenka Mackovi\v cov\'a
and Jana \v Skutov\'a.
This paper was prepared as a part of the ``SPAMIA'' project,
M\v S~SR~3709/2010-11, supported by the Ministry of Education, Science, Research and Sport
of the Slovak Republic, under the heading of the state budget support for research and development.
Supported by the Slovak Grant Agency under the grant number VEGA~1/0978/11
and by the Slovak Research and Development Agency under the contract No.~APVV-0134-10.

\bibliography{refs}

\end{document}